\theoremstyle{plain}
\newtheorem{theorem}{Theorem}[section]
 \newtheorem{corollary}[theorem]{Corollary}
 \newtheorem{proposition}[theorem]{Proposition}
\newtheorem{remark}{Remark}[section]
\newcommand{\vt}{{\Vert}}
\newcommand{\bm}[1]{\mbox{\boldmath $#1$}}
 \def\beqlb{\begin{eqnarray}}\def\eeqlb{\end{eqnarray}}
 \def\beqnn{\begin{eqnarray*}}\def\eeqnn{\end{eqnarray*}}
 \def\mbb{\mathbb}
 \def\qed{\hfill$\Box$\medskip}
\newcommand{\bcen}{\begin{center}}
\newcommand{\ecen}{\end{center}}
\newcommand{\bgeqn}{\begin{equation}}
\newcommand{\edeqn}{\end{equation}}
\begin{document}

\title{Limit theorems for a supercritical multi-type branching process with immigration in a random environment}

\author{  Jiangrui Tan\thanks{%
 School of Mathematics and Statistics, Central South University, Changsha 410075, P.R. China. Email:tanjiangrui@csu.edu.cn
} }
\maketitle

\noindent{\bf Abstract}\quad
Let $\{Z_n^i = (Z_n^i(r))_{1 \le r \le d}: n \ge 0\}$ be a supercritical $d$-type branching process in an i.i.d. environment $\xi = (\xi_0, \xi_1, \dots)$, starting from a single particle of type $i$. The offspring distribution at generation $n$ depends on the environment $\xi_n$, and we denote by $M_n = (M_n(i,j))_{1 \le i,j \le d}$ the corresponding (random) mean matrix. Recently, Grama et al. (Ann. Appl. Probab. \textbf{33}(2023) 1213-1251) extended the famous Kesten--Stigum theorem to the random environment case with $d>1$. They improved upon previous work by innovatively constructing a new normalized population process $(\tilde{W}^i_n)$. Under several simple assumptions, they proved that $\tilde{W}^i_n$ converges almost surely to a limit $\tilde{W}^i$, and that $\tilde{W}^i$ is non-degenerate if and only if a $\mbb{E}X\log^+ X<\infty$ type condition holds. 

In this paper, we study the situation where an immigrant vector $Y_n$ joins the population $Z_n^i$ at each generation $n \ge 0$; the distribution of $Y_n$ also depends on the environment $\xi_n$. Following the approach of Grama et al., we construct a normalized process $(W^i_n)$ for the model with immigration, establishing a Kesten--Stigum type theorem that characterizes the non-degeneracy of its almost sure limit. Moreover, we provide complete $L^p$-convergence criteria for $(W^i_n)$, treating separately the cases $1 < p < \infty$ and $0 < p < 1$. As an important byproduct, a sufficient condition for the boundedness of the maximal function $\sup_n \tilde{W}_n^i$ is also obtained. Our results show that, under a mild restriction on the number of immigrants, the inclusion of immigration does not affect the almost sure convergence property of the original normalized process, but it does have an impact on the criterion for $L^p$ convergence. 

 \vspace{0.3cm}

\noindent{\bf Keywords}\quad Multi-type Galton--Watson
processes; Random environment; Immigration; Limit theorem

\noindent{\bf MSC}\quad Primary 60J80, 60K37; Secondary 60J85, 60F25\\[0.4cm]

\bigskip

\section{Introduction}
\vspace{3mm}
  Branching processes constitute a cornerstone of modern probability theory, providing a rich and flexible framework for modeling the evolution of populations where individuals reproduce and die independently. The analytical power of these models has led to their widespread application across diverse fields, including biology for modeling population dynamics and gene propagation, physics for particle cascade phenomena, and epidemiology for the spread of diseases, see for example \cite{KA,pake}.

The fundamental mechanics of a branching process are defined by its reproduction law. In its simplest discrete-time form which is refered as Bienaym\'{e}--Galton--Watson process, the process evolves as follows: starting from an initial set of particles, each particle in a given generation, independently of others, produces a random number of offspring according to a fixed probability distribution upon its death; the subsequent generation is then formed by the totality of these offspring. This simple yet powerful mechanism allows for a deep analysis of population behavior, such as extinction probabilities and growth rates.

A significant generalization was introduced by Smith and Wilkinson \cite{smith}, who pioneered the study of Branching Processes in a Random Environment (BPRE). In this model, the reproduction law itself is driven by an external, randomly evolving environment, making the offspring distributions generation-dependent and stochastic. This framework more accurately captures the fluctuating conditions real populations face. The theory was substantially advanced by Athreya and Karlin \cite{ath1,ath2}, who established a comprehensive foundation for BPREs under the assumption of a stationary and ergodic environment, revealing profound connections between the properties of the environment sequence and the long-term fate of the population. For continuous state BPRE and spatial BPRE, we refer to \cite{rr1,rr2,rr3,rr4,rr5,rr6} and the references therein.

Branching processes in random environments are further categorized into single-type and multi-type models. The single-type case, where all individuals are considered identical, has been extensively studied, and its asymptotic behavior is now relatively well-understood across subcritical, critical, and supercritical regimes. We refer the reader to the recent book by Kersting and Vatutin \cite{kersting} for more details.  In contrast, the multi-type scenario, which accounts for different classes of individuals with type-specific reproduction, presents considerably greater analytical challenges. For the multi-type BPRE in the critical and subcritical regimes, Key \cite{key} and Roitershtein \cite{ro} provided pivotal results characterizing the path to extinction and establishing the limit theorems. For the supercritical multi-type BPRE, a breakthrough was achieved by Grama et al. \cite{grama1}, who established the asymptotic composition and the limiting distribution of the process after a suitable normalization.

This paper is devoted to the study of a supercritical multi-type branching process with immigration in a random environment.  Our primary objective is to generalize the seminal work of Grama et al. \cite{grama1,grama2} by incorporating an immigration component, where each generation may receive an influx of new individuals from an external source.  A detailed description of the model's evolution is provided in Section \ref{sec2.1}. The inclusion of immigration is not merely a mathematical generalization but is crucial for modeling populations. For instance, Bansaye \cite{ba} investigated a single-type BPRE with immigration to deal with cell contamination problems, and Vatutin \cite{va} considered a multi-type BPRE with immigration to study polling systems with random regimes of service. For other results of single-type BRRE, continuous state BPRE and super-Brownian motion with immigration, we refer to \cite{re1,re2,re3,re4,re5,re6} and the references therein.

To clearly articulate our motivation and situate our contributions within the existing literature, we will now provide a detailed overview of the key elements and results from the study by Grama et al. \cite{grama1}.

Consider a $d$-type branching process in a deterministic environment initiated by a single particle of type $i$ : $\{Z^i_n=(Z^i_n(r))_{1\le r\le d}:n\ge0\}$, where $Z^i_n(r)$ denotes the number of particles of types $r$ at generation $n$ and $Z_0=e_i$, where $e_i$ is the unit vector whose $i$-th component is $1$. Let $M=(M(i,j))_{1\le i,j\le d}$ denote the mean progeny matrix of $\{Z_n^i\}_{n\ge0}$, that is, $M(i,j)=\mbb{E}(Z^i_1(j)|Z_0=e_i)$. We assume $M$ is primitive in the sense that there exists an integer $k\ge1$ such that $M^k(i,j)>0$ for all $i,j$. Let $\rho$ be the spectral radius of the
mean matrix of $M$. Let $u=(u(r))_{1\le r \le d}$ and $v=(v(r))_{1\le r \le d}$ be the associated right and left eigenvectors respectively with the normalization $\vt v\vt=\langle u,v\rangle=1$, where $\vt \cdot\vt$ denotes the $L_1$-norm and $\langle\cdot,\cdot\rangle$ the scalar product. Assume $\rho>1$, which means $\{Z_n^i\}_{n\ge0}$ is supercritical. The famous Kesten--Stigum theorem states that, for any $1\le i,j \le d$, there exists a nonnegative random variable $W^i$ such that
\[
\frac{Z^i_n(j)}{\rho^nv(j)}\xrightarrow{a.s.} W^iu(j),\quad \text{as}\quad n\to\infty.
\]
Moreover, $W^i$ is non-degenerate if and only if $\sup_i\mbb{E}(Z^i_1(j)\log^+Z_1^i(j))<\infty$, where $\log^{+}x:=0\vee\log x$. The proof of Kesten--Stigum theorem is based on the construction of the fundamental non-negative martingale
\begin{equation}\label{mar1}
W^i_0=1, \quad W_n^i=\frac{\langle Z_n^i, u \rangle}{\rho^n u(i)},\quad n\ge1.
\end{equation}

The generalization of Kesten--Stigum theorem to a random environment case is a long-standing problem. In this situation, the mean matrix and related spectral radius is random. For $n\ge0$, let $M_n=M_n(\xi)$ denote the matrix of conditioned means of the offspring distribution of $n$-th generation given the environment $\xi=(\xi_k)_{k\ge0}$:
\[
M_n(i,j)=\mbb{E}(Z_{n+1}^i(j)|Z_n=e_i,\xi).
\]
Denote by $\rho_n=\rho_n(\xi)$ the spectral radius of $M_n$. Denote the products by $M_{n,n+k}=M_n\cdots M_{n+k}$ and $\rho_{n,n+k}=\rho_n\cdots\rho_{n+k}$. Let $U_{n,n+k}$ and $V_{n,n+k}$ be the corresponding normalized right and left eigenvectors of $M_{n,n+k}$ respectively. The straightforward way to generalize Kesten--Stigum theorem would be replacing $\rho^n$ and $u$ by $\rho_{0,n-1}$ and $U_{0,n-1}$ in \eqref{mar1}. However, this construction does not lead to a martingale which makes the problem much more delicate. Recently, a breakthrough was achieved by Grama et al. \cite{grama1}. Grama and his collaborators utilized Henion's work \cite{hen} on the analog of the Perron-Frobenius theorem for products of random matrices to identify a suitable martingale construction, thereby providing a complete proof of the Kesten--Stigum theorem for multi-type BPRE. Under a mild condition, Henion's results \cite[Theorem 1]{hen} established the existence of a sequence of unit vector $(U_{n,\infty})_{n\ge0}$ satisfying
\[
M_nU_{n+1,\infty}=\lambda_nU_{n,\infty}, \quad n\ge0,
\]
where $\lambda_n=\vt M_nU_{n+1,\infty}\vt$. Let $\lambda_{n,n+k}=\lambda_n\cdots\lambda_{n+k}$. Therefore, Grama et al. could define a positive martingale associated with $(Z_n^i)$ as
\[
\tilde{W}_0^i=1, \quad \tilde{W}_n^i=\frac{\langle Z_n^i, U_{n,\infty} \rangle}{\lambda_{0,n-1} U_{0,\infty}(i)}, \quad n\ge1,
\]
and prove the non-degeneracy of its almost sure limit. Note that this construction is the same as \eqref{mar1} when the environment is deterministic.

The results of \cite{grama1} open ways to prove many important properties such as central limit theorems and large deviation asymptotics for multi-type BPRE, see \cite{grama2, grama3, grama4}. The main objective of this paper is to extend the Kesten--Stigum theorem to the framework of multi-type BPRE with immigration based on the work of Grama et al. \cite{grama1} and Henion \cite{hen}. We now explain briefly the main results of this paper. Let $\{X^i_n=(X^i_n(r))_{1\le r\le d}:n\ge0\}$ be a multi-type BPRE with immigration in an i.i.d. environment $\xi=(\xi_n)_{n\ge0}$. The random variable $X_{n+1}^i$, representing the particle count at generation $n$, comprises two components: the offspring generated by the $n$-th generation and the immigrant particles arriving at the $n$-th generation:
\[
X^i_{n+1}=\sum_{r=1}^d\sum_{l=1}^{X^i_n(r)}N_{l,n}^r+Y_n.\]
where $N_{l,n}^r\in\mbb{N}^d$ represents the offspring vector at time $n+1$ of the $l$-th particle of type $r$ in the generation $n$ and $Y_n$ represents the vector of particles immigrated at generation $n$. By grouping all immigrant particles and their descendants, we obtain the decomposition:
\begin{equation}\label{de2}
X^i_{n+1}=Z^i_{n+1}+\sum_{k=0}^n\hat{Z}_{k},
\end{equation}
 where $\hat{Z}_{k},k\ge0$ counts all descendants up to generation $n+1$ originating from the immigrants arriving at generation $k$, while $Z^i_{n+1}$ represents the descendants at generation $n+1$ from the initial particle of type $i$. Clearly, the process $\{Z^i_n\}_{n\ge0}$ is a multi-type BPRE. We will adopt the notation associated with $\{Z^i_n\}_{n\ge0}$ introduced earlier in this chapter. Define
\[
W_0^i=1, \quad W_n^i=\frac{\langle X_n^i, U_{n,\infty} \rangle}{\lambda_{0,n-1} U_{0,\infty}(i)}, \quad n\ge 1.
\]
The main contributions of this paper are as follows:
\begin{itemize}
    \item We first establish that, under mild conditions, $(W_n^i)_{n\ge0}$ forms a submartingale under both the quenched and annealed laws.
    \item Secondly, we characterize the necessary and sufficient conditions for the uniform boundedness of $(W_n^i)_{n\ge0}$ under the quenched and annealed laws, respectively. This directly implies the almost sure convergence $W^i := \lim_{n \to \infty} W^i_n$ ($\mathbb{P}$-a.s.) with $W^i$ taking values in $[0,\infty)$.
    \item Thirdly, under the aforementioned boundedness condition, we prove a Kesten--Stigum type theorem that characterizes the non-degeneracy of the limit $W^i$.
    \item Furthermore, we provide complete $L^p$-convergence criteria for $(W_n^i)$, treating the cases $1 < p < \infty$ and $0 < p < 1$ separately.
    \item As key byproducts that are instrumental to our proofs, we derive a decomposition of $W^i$ via a multi-type Galton--Watson tree and establish a sufficient condition for the boundedness of the maximal function $\sup_{n\ge0} \tilde{W}_n^i$.
\end{itemize}

\section{Preliminaries}
\vspace{3mm}
\subsection{Description of the model and notation}\label{sec2.1}
At first, let us fix some notation. Throughout this paper, we consider a $d$-type ($d>1$) branching process in a random environment (BPRE) with immigration. Let $\mathbb{R}^d$ denote the $d$-dimensional Euclidean space consisting of vectors with real coordinates. We write $\bm{1}$, $\bm{0}$, and $e_i$ for the $d$-dimensional vectors whose entries are all $1$, all $0$, and all $0$ except that the $i$-th component equals $1$, respectively. For any $x, y \in \mathbb{R}^d$, define
\[
\vt x \vt=\sum_{i=1}^d|x(i)|\quad \text{and }\quad \langle x, y\rangle=\sum_{i=1}^dx(i)y(i).
\]
Let $\mathcal{M}_d(\mathbb{R})$ denote the space of all $d \times d$ real matrices. For $M = (M(i,j))_{1 \le i, j \le d} \in \mathcal{M}_d(\mathbb{R})$, its operator norm is denoted by $\vt M \vt$ and defined as
\[
\vt M \vt=\sup_{\vt x\vt=1}\vt Mx\vt.
\]
We denote the set of non-negative integers by $\mathbb{N} = \{0, 1, 2, \dots\}$ and the set of positive integers by $\mathbb{N}_+ = \{1, 2, \dots\}$. The letters $C$, $C'$ (possibly with subscripts) denote strictly positive constants whose values may change from line to line, unless otherwise stated. The indicator function of an event $A$ is written as $\mathbf{1}_A$. Almost sure convergence is denoted by $\xrightarrow{\text{a.s.}}$.

Let \(\xi = (\xi_0, \xi_1, \xi_2, \dots)\) be an independent and identically distributed (i.i.d.) sequence of random variables taking values in an abstract space \(\Theta\), which serves as the random environment. Each realization of \(\xi_n\) corresponds to \(d+1\) probability distributions on \(\mathbb{N}^d\): \(d\) offspring distributions and one immigration distribution, all of which are characterized by their probability generating functions (p.g.f.s). Specifically, for \(s = (s_1, \dots, s_d) \in [0,1]^d\), the offspring distributions are defined by
\[
f_n^r(s) = \sum_{k_1, \dots, k_d = 0}^\infty p_{k_1, \dots, k_d}^r(\xi_n) \, s_1^{k_1} \cdots s_d^{k_d}, \quad r = 1, 2, \dots, d,
\]
and the immigration distribution is defined by
\[
g_n(s) = \sum_{k_1, \dots, k_d = 0}^\infty \hat{p}_{k_1, \dots, k_d}(\xi_n) \, s_1^{k_1} \cdots s_d^{k_d}.
\]

A \(d\)-type branching process in the random environment \(\xi\) with immigration is a process \(\bigl\{ X_n = (X_n(r))_{1 \leq r \leq d} : n \geq 0 \bigr\}\) defined as follows: the initial value \(X_0 \in \mathbb{N}^d\) is fixed, and for all \(n \ge 0\),
\begin{equation}\label{e1.1}
X_{n+1} = \sum_{r=1}^d \sum_{\ell=1}^{X_n(r)} N_{\ell, n}^r + Y_n.
\end{equation}
Here, \(X_n(r)\) denotes the number of particles of type \(r\) in generation \(n\), \(N_{\ell, n}^r \in \mathbb{N}^d\) is the offspring vector produced by the \(\ell\)-th type-\(r\) particle in generation \(n\), and \(Y_n \in \mathbb{N}^d\) is the immigration vector introduced at generation \(n\). Conditional on the environment \(\xi\), the random variables \(\{N_{\ell, n}^r : \ell \ge 1, n \ge 0, 1 \le r \le d\}\) and \(\{Y_n : n \ge 0\}\) are all mutually independent. For each \(\ell \ge 1\) and \(n \ge 0\), the vector \(N_{\ell, n}^r\) has the p.g.f. \(f_n^r\), and \(Y_n\) has the p.g.f. \(g_n\). Note that we do not assume independence between the offspring distribution \(p(\xi_n)\) and the immigration distribution \(\hat{p}(\xi_n)\). When the process starts with a single particle of type \(i\), i.e., \(X_0 = e_i\), we denote \(X_n\) by \(X_n^i\) for all \(n \ge 0\).

The process \(\{X_n = (X_n(r))_{1 \leq r \leq d} : n \geq 0\}\) with an environment $\xi$ evolves as follows.
\begin{itemize}
    \item[(i)] (Offspring production). Every individual of type \(i\;(1\le i\le d)\) in generation \(n\) has a lifetime of one unit. At the end of its life, it produces a random offspring vector \(j = (j_1, \dots, j_d) \in \mathbb{N}^d\) with probability \(p_{j_1, \dots, j_d}^i(\xi_n)\).
    \item[(ii)] (Immigration). Simultaneously, at the end of generation \(n\), an immigrant vector \(Y_n = (Y_n(1), \dots, Y_n(d))\) arrives. The distribution of \(Y_n\) is governed by the law \(\hat{p}(\xi_n)\). All reproduction and immigration events are independent of one another. The individuals of generation \(n+1\) consist precisely of the offspring produced in (i) and the immigrants arriving in (ii).
\end{itemize}

Let $\mathbb{P}_{\xi}$ denote the conditional probability law under which the process $\{X_n: n \ge 0\}$ is defined given the environment $\xi$. The total probability $\mathbb{P}$ is expressed as
\[
\mathbb{P}(\mathrm{d}x, \mathrm{d}\xi) = \mathbb{P}_{\xi}(\mathrm{d}x) \, \tau(\mathrm{d}\xi),
\]
where $\tau$ denotes the distribution of the environment sequence $\xi$. The measure $\mathbb{P}_{\xi}$ is commonly referred to as the quenched law, while $\mathbb{P}$ is called the annealed law. Let $Y = (Y_0, Y_1, \dots)$ denote the immigration sequence, and let $\mathbb{P}_{\xi,Y}$ be the conditional probability under $\mathbb{P}$ given $\xi$ and $Y$: for any measurable set $A$,
\[
\mathbb{P}_{\xi,Y}(A) = \mathbb{E}[\mathbf{1}_A \mid \sigma(\xi, Y)],
\]
where $\sigma(\xi, Y)$ is the $\sigma$-algebra generated by $\xi$ and $Y$. We denote the expectations with respect to $\mathbb{P}_{\xi,Y}$, $\mathbb{P}_{\xi}$, and $\mathbb{P}$ by $\mathbb{E}_{\xi,Y}$, $\mathbb{E}_{\xi}$, and $\mathbb{E}$, respectively.

Given the random environment $\xi$, we define the sequence of mean matrices $(M_n)_{n \ge 0}$ by
\begin{equation}\label{mn}
M_n = M_n(\xi_n) := \left( \frac{\partial f_n^r}{\partial s_j}(\mathbf{1}) \right)_{1 \le r,j \le d} 
= \left( \mathbb{E}_{\xi} N_{1,n}^r(j) \right)_{1 \le r,j \le d},
\end{equation}
where $\frac{\partial f}{\partial s_j}(1)$ denotes the left derivative of $f$ with respect to $s_j$ at the point $\mathbf{1}$. The $(i,j)$-th entry $M_n(i,j)$ of $M_n$ represents the conditional expectation of the number of type-$j$ offspring produced by a single type-$i$ particle at generation $n$. Each matrix $M_n$ depends solely on $\xi_n$, and the sequence $(M_n)_{n \geq 0}$ is i.i.d. For $0 \leq k \leq n$, we define the product matrix
\[
M_{k,n} := M_k \cdots M_n = \left( \frac{\partial f^i_k \circ f_{k+1} \circ \cdots \circ f_n}{\partial s_j}(\mathbf{1}) \right)_{1 \leq i,j \leq d}.
\]
\subsection{Assumptions }
We now introduce the following condition for classifying multi-type BPREs with immigration:

\noindent\textbf{Condition H1.} The random matrix $M_0$ satisfies the moment condition:
\[
\mathbb{E}[\log^+\| M_0 \|] < \infty.
\]

From \cite{kesten}, under Condition \textbf{H1}, there exists a constant $\gamma<\infty$ such that
\[
\lim_{n \to \infty} \frac{1}{n} \log \| M_{0,n-1} \|=\gamma\]
with probability $1$, and
\[
\lim_{n \to \infty} \frac{1}{n} \mathbb{E}[\log \| M_{0,n-1} \|]=\gamma.
\]
The constant $\gamma$ is referred to as the Lyapunov exponent of the random matrix sequence $(M_n)_{n \ge 0}$. We say that a multi-type BPRE with immigration is subcritical if $\gamma<0$, critical if $\gamma=0$ and supercritical if $\gamma>0$. This classification aligns with the standard classification for both the non-immigration case and the case of deterministic environments. Throughout this paper, we assume $\gamma>0$.

In what follows, we review relevant results from Hennion~\cite{hen} and Grama et al.~\cite{grama1}. Let $\mathcal{S}$ denote the semigroup of matrices in $\mathcal{M}_d(\mathbb{R})$ with positive entries that are allowable in the sense that every row and column contains at least one strictly positive element, and let $\mathcal{S}_0$ be the subset of matrices with all entries strictly positive. We impose the following condition:

\noindent\textbf{Condition H2.} The matrix $M_0$ belongs to $\mathcal{S}$ $\mathbb{P}$-a.s., and
\[
\mathbb{P}\left(\bigcup_{n \ge 0} \{ M_{0,n} \in \mathcal{S}_0 \} \right) > 0.
\]

Under Condition \textbf{H2}, for any $n, k \geq 0$, the product $M_{n,n+k}$ belongs to $\mathcal{S}$ $\mathbb{P}$-a.s. Let $\rho_{n,n+k}$ denote the spectral radius of $M_{n,n+k}$. By the classical Perron--Frobenius theorem (see, e.g., \cite{Athreya1972Branching}), $\rho_{n,n+k}$ is a strictly positive eigenvalue of $M_{n,n+k}$, associated with positive right and left eigenvectors $U_{n,n+k}$ and $V_{n,n+k}$, respectively, normalized such that $\|U_{n,n+k}\| = 1$ and $\langle V_{n,n+k}, U_{n,n+k} \rangle = 1$. 

Hennion~\cite{hen} extended the Perron--Frobenius theory to products of random matrices. Under Condition \textbf{H2}, he showed that for all $n \ge 0$, the sequence $(U_{n,n+k})_{k \ge 0}$ converges $\mathbb{P}$-a.s. to a random unit vector, denoted here by $U_{n,\infty}$. Furthermore, defining
\[
\lambda_n = \lambda_n(\xi) = \| M_n U_{n+1,\infty} \|, \quad n \ge 0,
\]
which are strictly positive, the following relation holds:
\begin{equation}\label{rel}
M_n U_{n+1,\infty} = \lambda_n U_{n,\infty}.
\end{equation}
The numbers $\lambda_n$ are called the {pseudo spectral radii} of the random matrix products and play a key role in constructing martingales associated with multi-type BPREs. Moreover, from \cite[Proposition 2.5]{grama1}, under conditions \textbf{H1} and \textbf{H2}, 
\begin{equation}\label{1.21}
\lim_{n\to\infty}\frac{1}{n}\log \lambda_{0,n-1}=\mbb{E}\log\lambda_0=\gamma\quad \mbb{P}\text{-a.s}.
\end{equation}
Recall from~\eqref{de2} that a multi-type BPRE with immigration admits a decomposition into a branching process without immigration and a sequence of descendants generated by immigrant particles (see Section~3 for a detailed derivation):
\begin{equation}\label{mul}
X^i_{n+1} = Z^i_{n+1} + \sum_{k=0}^n \hat{Z}_k,
\end{equation}
where $\{Z_n^i: n \ge 0\}$ is a multi-type BPRE with $Z_0^i = e_i$. Define
\[
\tilde{W}_0^i = 1, \quad \tilde{W}_n^i = \frac{\langle Z_n^i, U_{n,\infty} \rangle}{\lambda_{0,n-1} U_{0,\infty}(i)},
\]
and the filtration
\[
\tilde{\mathcal{F}}_n = \sigma\left( \xi, \; N_{l,k}^r: 0 \leq k \leq n-1, \; 1 \leq r \leq d, \; l \geq 1 \right).
\]

We also require the following condition introduced by Furstenberg and Kesten~\cite{kesten}:

\noindent\textbf{Condition H3.} There exists a constant $D> 1$ such that
\[
1 \le \frac{\max_{1 \le i, j \le d} M_0(i,j)}{\min_{1 \le i, j \le d} M_0(i,j)} \le D, \quad \mathbb{P}\text{-a.s.}
\]

Note that Condition \textbf{H3} implies Condition \textbf{H2}. The next condition is analogous to the classical $\mathbb{E}[X \log^+ X] < \infty$ assumption in the Kesten--Stigum theorem~\cite{ks} for the deterministic environment case:

\noindent\textbf{Condition H4.} For all $1 \le i, j \le d$,
\[
\mathbb{E}\left[ \frac{Z_1^i(j)}{M_0(i,j)} \log^+ \frac{Z_1^i(j)}{M_0(i,j)} \right] < \infty.
\]

Under the aforementioned assumptions, the results of Grama et al. \cite[Theorems 2.3 and 2.9]{grama1} and \cite[Theorem 2.1]{grama2}  can be specified to the following two propositions.

\begin{proposition}[{\cite[Theorems 2.3 \& 2.9]{grama1}}]\label{pro1}
  (a) Assume that Condition \textbf{H2} holds. For each $1 \le i \le d$, the sequence $(\tilde{W}_n^i)_{n \ge 0}$ is a non-negative martingale with respect to the filtration $(\tilde{\mathcal{F}}_n)_{n \ge 0}$, under both $\mathbb{P}_\xi$ and $\mathbb{P}$. Consequently, $(\tilde{W}_n^i)_{n \ge 0}$ converges $\mathbb{P}$-almost surely to a non-negative random variable $\tilde{W}^i$.
  
  (b) Assume Conditions \textbf{H1}, \textbf{H3}, and $\gamma > 0$. Then if and only if Condition \textbf{H4} is satisfied, $\tilde{W}^i$ ($1 \le i \le d$) are non-degenerate, in the sense that
  \[
  \mathbb{P}_\xi(\tilde{W}^i > 0) > 0 \quad \mathbb{P}\text{-a.s.}
  \]
  and $\mathbb{E}_\xi[\tilde{W}^i] = 1$ for all $1 \le i \le d$.
\end{proposition}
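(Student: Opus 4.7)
For part (a), my plan is to verify the conditional expectation identity by a direct branching computation and then invoke Doob's convergence theorem. Since the entire environment $\xi$ lies inside $\tilde{\mathcal{F}}_n$, the vector $U_{n+1,\infty}$ (a function of $(\xi_k)_{k\ge n+1}$) is $\tilde{\mathcal{F}}_n$-measurable. The branching decomposition $Z_{n+1}^i = \sum_{r=1}^d \sum_{\ell=1}^{Z_n^i(r)} N_{\ell,n}^r$ with $\mathbb{E}_\xi N_{\ell,n}^r = M_n(r,\cdot)$ yields, by linearity of conditional expectation,
\[
\mathbb{E}_\xi\bigl[\langle Z_{n+1}^i, U_{n+1,\infty}\rangle \bigm| \tilde{\mathcal{F}}_n\bigr] = \sum_{r=1}^d Z_n^i(r)\, (M_n U_{n+1,\infty})(r) = \lambda_n \langle Z_n^i, U_{n,\infty}\rangle,
\]
where the second equality is the Hennion identity \eqref{rel}. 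Dividing by $\lambda_{0,n} U_{0,\infty}(i)$ gives $\mathbb{E}_\xi[\tilde{W}_{n+1}^i\mid \tilde{\mathcal{F}}_n] = \tilde{W}_n^i$; a further integration over $\xi$ yields the annealed property. Non-negativity is manifest from the positivity of $Z_n^i$, $U_{n,\infty}$, $\lambda_{0,n-1}$, and $U_{0,\infty}(i)$, and Doob's theorem produces the $\mathbb{P}$-a.s.\ limit $\tilde{W}^i$.

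For the sufficient direction of part (b), I would adapt the classical Kesten--Stigum argument to the random-environment norming. Condition \textbf{H3} is the enabling ingredient: the uniform ratio bound on the entries of $M_n$ forces each coordinate of $U_{n,\infty}$ to lie in a deterministic interval bounded away from zero and from one, and makes $\lambda_n$ comparable (up to constants depending only on $d$ and $D$) to any entry of $M_n$ and to $\rho_n$. Assuming Condition \textbf{H4}, I would write the martingale increment as
\[
\tilde{W}_{n+1}^i - \tilde{W}_n^i = \frac{1}{\lambda_{0,n} U_{0,\infty}(i)} \sum_{r=1}^d \sum_{\ell=1}^{Z_n^i(r)} \bigl\langle N_{\ell,n}^r - M_n(r,\cdot), U_{n+1,\infty}\bigr\rangle,
\]
truncate each coordinate $N_{\ell,n}^r(j)$ at a threshold growing like $M_n(r,j) e^{n\gamma/2}$, bound the truncated piece in $L^2$ along the backbone $\lambda_{0,n-1} \sim e^{n\gamma}$ from \eqref{1.21}, and control the excess via $\sum_n \mathbb{P}(X > e^n) \le \mathbb{E}[\log^+ X] + 1$ applied to $Z_1^i(j)/M_0(i,j)$. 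This yields uniform integrability of $(\tilde{W}_n^i)$, hence $\mathbb{E}_\xi \tilde{W}^i = 1$ for all $i$, and in particular non-degeneracy.

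For necessity I would argue by contraposition: assuming Condition \textbf{H4} fails for some pair $(i,j)$, the branching decomposition
\[
\tilde{W}_{n+1}^i = \frac{1}{\lambda_0 U_{0,\infty}(i)} \sum_{r=1}^d \sum_{\ell=1}^{Z_1^i(r)} U_{0,\infty}(r)\, \tilde{W}_{n}^{(r,\ell)},
\]
where the $\tilde{W}_n^{(r,\ell)}$ are the analogous martingales for the subtrees (conditionally independent given the shifted environment), coupled with a Borel--Cantelli estimate on the maximum offspring contribution across generations, forces $\tilde{W}^i = 0$ $\mathbb{P}$-a.s. The principal obstacle in both halves is controlling the random normalization $\lambda_{0,n-1}$, whose fluctuations (unlike the deterministic $\rho^n$) are only governed by Furstenberg--Kesten theory through \eqref{1.21}. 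Condition \textbf{H3} combined with Hennion's theorem is precisely what tames these fluctuations: it renders $U_{n,\infty}$, $\lambda_n$, and the entries of $M_n$ mutually comparable up to uniform multiplicative constants, which is the decisive technical input making the truncation, second-moment, and Borel--Cantelli steps go through.
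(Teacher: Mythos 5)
This proposition is cited verbatim from Grama, Liu, and Pin~\cite[Theorems~2.3~\&~2.9]{grama1}; the paper you are reading does not supply its own proof, so there is no internal argument to compare your proposal against. I will therefore assess the proposal on its own merits.

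Your part~(a) is correct and is essentially the only possible argument: $\tilde{\mathcal{F}}_n \supseteq \sigma(\xi)$ makes $U_{n+1,\infty}$ and $\lambda_{0,n}$ measurable, the branching decomposition combined with Hennion's fixed-point identity $M_n U_{n+1,\infty} = \lambda_n U_{n,\infty}$ gives the quenched martingale property, the annealed property follows by the tower rule since $\sigma(\xi) \subseteq \tilde{\mathcal{F}}_n$, and Doob's theorem then furnishes the a.s.\ limit.

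For part~(b) the overall route (Athreya--Tanny-style truncation, $L^2$ bound on the truncated increments, Borel--Cantelli for the excess, contraposition via the recursive branching identity for necessity) is a legitimate line of attack, and your emphasis on \textbf{H3} as the device that makes $U_{n,\infty}$, $\lambda_n$ and the entries of $M_n$ mutually comparable is exactly right. However, the specific truncation threshold $M_n(r,j)\,e^{n\gamma/2}$ does not close the Borel--Cantelli estimate on the excess. Writing $X = Z_1^i(j)/M_0(i,j)$, the excess requires controlling (in normalized units) $\sum_n \lambda_{0,n-1}\,\mathbb{P}(X > a_n)$ with $a_n$ the normalized threshold. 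Since $\lambda_{0,n-1}$ grows like $e^{n\gamma}$, a Fubini computation shows that $a_n = e^{n\gamma/2}$ leads to a series that converges only under $\mathbb{E}[X^2]<\infty$, strictly stronger than \textbf{H4}, whereas $a_n \asymp e^{n\gamma}$ (equivalently, truncating at a level proportional to $\lambda_{0,n-1}$) yields precisely $\sum_n \mathbb{E}[X\,\mathbf{1}_{X > e^{n\gamma}}] \asymp \mathbb{E}[X\log^+ X]/\gamma$, which is what \textbf{H4} supplies; the $L^2$ sum also remains finite at this higher threshold once you bound $\mathbb{E}[X^2\mathbf{1}_{X\le a_n}]$ by a Fubini/layer-cake computation rather than the crude $a_n\,\mathbb{E}[X]$. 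A secondary point: replacing the random $\lambda_{0,n-1}$ by the deterministic $e^{n\gamma}$ via \eqref{1.21} gives only $n^{-1}\log\lambda_{0,n-1}\to\gamma$ a.s., so these estimates must be carried out conditionally on $\xi$ (or one must explicitly pass through the quenched law and handle the a.s.\ ``eventually'' qualifier). Neither issue invalidates the strategy, but both would need fixing before the sketch could be upgraded to a proof.
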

The proposition above establishes the almost sure convergence of $\tilde{W}^i$ and provides a criterion for non-degeneracy. Next, we will present the $L^p$ convergence results for $\tilde{W}^i$. Before doing so, we first introduce some notation. Set
\[
I = \{s \leq 0 : \mathbb{E}M_0(i,j)^s < +\infty \quad \forall i,j=1,\dots,d\}.
\]
By H\"{o}lder's inequality, $I$ is an interval, and if there exists $s \in I$ with $s < 0$, then $M_0 > 0$ $\mathbb{P}$-a.s. From \cite[Proposition 3.1]{grama2},  for $s \in I$, the limit
\begin{equation}\label{2.1}
\kappa(s) := \lim_{n \to +\infty} (\mathbb{E}\|M_{0,n-1}\|^s)^{1/n}
\end{equation}
exists, with $\kappa(s) < +\infty$. The following relations (see \cite[Lemma 3.2 \& (4.10)]{grama2}) will be used repeatedly: for all $s\in I$, $n\ge1$ and $1\le i \le d$, there exist $C_s>0$ such that
\begin{align}\label{2.2}
  &\mbb{E}\vt M_{0,n-1}\vt^s\le\mbb{E}_{T^n\xi}\lambda_{0,n-1}^s=\mbb{E}_{T^n\xi}\vt M_{0,n-1}U_{n,\infty} \vt^s \le C_s \kappa(s)^n\quad \text{and}\nonumber\\
  &\mbb{E}\vt M_{0,n-1}\vt^s\le\mbb{E}_{T^n\xi}(\lambda_{0,n-1}U_{0,\infty})^s=\mbb{E}_{T^n\xi}\langle M_{0,n-1}U_{n,\infty}, e_i\rangle^s\le C_s \kappa(s)^n,
\end{align}
where $T^n$ is the $n$-fold iteration of $T$ (with the convention $T^0$ is the identity operator), with $T$ the shift translation on the environment:
\[
T(\xi_0,\xi_1,\dots)=(\xi_1,\xi_2,\dots). 
\]
\begin{proposition}[{\cite[Theorem 2.1]{grama2}}]\label{pro2} Assume Conditions \textbf{H1}, \textbf{H3}, and $\gamma > 0$. Assume $p>1$, $1-p\in I$. If and only if 
\[
\max_{1 \leq i, j \leq d} \mathbb{E} \left( \frac{Z_1^i(j)}{M_0(i,j)} \right)^p < +\infty \quad \text{and} \quad \kappa(1-p) < 1,
\] 
then $W_n^i\to W^i$ in $L^p$ as $n\to\infty$.
\end{proposition}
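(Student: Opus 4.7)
The plan is based on the observation that, by Proposition~\ref{pro1}(a), $(\tilde W_n^i)_{n\ge 0}$ is a non-negative martingale converging a.s.\ to $\tilde W^i$, so $L^p$-convergence with $p>1$ is equivalent to $L^p$-boundedness. I would therefore study the increment
\[
\Delta_n := \tilde W_{n+1}^i-\tilde W_n^i=\frac{1}{\lambda_{0,n}U_{0,\infty}(i)}\sum_{r=1}^d\sum_{\ell=1}^{Z_n^i(r)}\bigl(\langle N_{\ell,n}^r,U_{n+1,\infty}\rangle-\lambda_nU_{n,\infty}(r)\bigr),
\]
which follows from the branching decomposition of $Z_{n+1}^i$ and the eigenrelation~\eqref{rel}. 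The crucial structural point is that $U_{n+1,\infty}$, $U_{n,\infty}$ and $\lambda_n$ are $\sigma(\xi)$-measurable, hence $\tilde{\mathcal F}_n$-measurable; conditionally on $\tilde{\mathcal F}_n$, the summands of $\Delta_n$ are centered and independent, a feature I would exploit at each step.

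For the sufficiency direction, I would apply a conditional Rosenthal inequality (or the simpler von Bahr--Esseen inequality when $1<p\le 2$) to the centered independent sum, and use $\|U_{n+1,\infty}\|=1$ to bound $|\langle N_{\ell,n}^r,U_{n+1,\infty}\rangle|$ by $\|N_{\ell,n}^r\|$. Condition \textbf{H3} makes all coordinates of $U_{n,\infty}$ comparable, and the change of variable $N_{\ell,n}^r(j)=M_n(r,j)\cdot N_{\ell,n}^r(j)/M_n(r,j)$ reduces the offspring moments to $\mathbb{E}(Z_1^r(j)/M_0(r,j))^p$ after a shift of the environment. After integrating out $\xi$, the normalization factor becomes $\mathbb{E}\lambda_{0,n-1}^{1-p}$, which by~\eqref{2.2} is dominated by $C_{1-p}\,\kappa(1-p)^n$. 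Since $\kappa(1-p)<1$, the series $\sum_n\mathbb{E}|\Delta_n|^p$ is summable, yielding $\sup_n\mathbb{E}(\tilde W_n^i)^p<\infty$ and hence $L^p$-convergence.

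For the necessity direction, I would iterate the branching fixed-point identity
\[
\tilde W^i=\sum_{r=1}^d\sum_{\ell=1}^{Z_1^i(r)}\frac{U_{1,\infty}(r)}{\lambda_0\,U_{0,\infty}(i)}\,\tilde W^{(r,\ell)},
\]
where, conditionally on $\xi$, the $\tilde W^{(r,\ell)}$ are independent copies with the law of $\tilde W^r$ under the shifted environment $T\xi$. Assuming $L^p$-convergence, $\mathbb{E}_\xi\tilde W^i=1$ and $\mathbb{E}(\tilde W^i)^p<\infty$. Retaining just one $(r,\ell)$-term in the sum and applying a conditional Marcinkiewicz--Zygmund lower bound produces the moment bound $\mathbb{E}(Z_1^i(j)/M_0(i,j))^p<\infty$; iterating the identity $n$ times and extracting the deterministic prefactor $\lambda_{0,n-1}^{-(p-1)}$ shows that $\mathbb{E}\lambda_{0,n-1}^{1-p}$ must stay bounded in $n$, forcing $\kappa(1-p)\le 1$ through the definition~\eqref{2.1}; the strict inequality is then obtained by excluding the boundary case via the mass-balance constraint $\mathbb{E}_\xi\tilde W^i=1$.

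The main obstacle I anticipate is the translation, in both directions, between the offspring-moment conditions (phrased in terms of $M_0$ and $N_{\ell,n}^r$) and the spectral-product quantities $\lambda_{0,n-1}$ and $\kappa(1-p)$ for the random matrix sequence. Inequality~\eqref{2.2} together with Condition \textbf{H3} is precisely what bridges these two worlds, since \textbf{H3} keeps the coordinates of $U_{n,\infty}$ uniformly comparable. Without this uniform control some coordinates of $U_{n,\infty}$ could degenerate, the pseudo spectral radii $\lambda_n$ would not track $\|M_n\|$ closely, and $\kappa(1-p)$ would cease to be the correct summability exponent.
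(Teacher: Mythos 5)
Proposition~\ref{pro2} is imported without proof from Grama--Liu--Pin (\emph{Proc.\ Steklov Inst.\ Math.}\ 316, 2022, Theorem 2.1); the paper at hand merely cites it, and there is a typo in the statement ($W_n^i$ should read $\tilde W_n^i$, as you correctly infer). Your sketch identifies the right ingredients for such a proof: conditional independence of the offspring sum given $\tilde{\mathcal F}_n$, a Rosenthal/Bahr--Esseen bound on the martingale increments, Condition \textbf{H3} together with \eqref{2.2} to convert $\mathbb E\lambda_{0,n-1}^{1-p}$ into $C\kappa(1-p)^n$, and a size-biased fixed-point iteration for the converse.

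Two steps in your outline need repair. For $p>2$ the implication ``$\sum_n\mathbb E|\Delta_n|^p<\infty$, hence $\sup_n\mathbb E(\tilde W_n^i)^p<\infty$'' is false in general; the correct chain is Burkholder followed by Minkowski in $L^{p/2}$, which requires $\sum_n\bigl(\mathbb E|\Delta_n|^p\bigr)^{2/p}<\infty$. The geometric decay $\mathbb E|\Delta_n|^p\le C\kappa(1-p)^n$ you obtain does supply this, but only after the Burkholder/Minkowski step is inserted; raw summability does not suffice. More seriously, your necessity argument produces only $\kappa(1-p)\le 1$, whereas the proposition asserts the strict inequality, and ``excluding the boundary case via mass-balance'' is not an argument: once Condition \textbf{H4} holds (which $L^p$-convergence forces, via non-degeneracy and Proposition~\ref{pro1}), the identity $\mathbb E_\xi\tilde W^i=1$ holds regardless of the value of $\kappa(1-p)$, so it cannot distinguish $\kappa(1-p)=1$ from $\kappa(1-p)<1$. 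Proving that $L^p$-convergence forces $\kappa(1-p)<1$ is the genuinely delicate half of the converse and would require a sharper lower moment bound on $\tilde W^i$ than your outline supplies.
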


\section{Main results}

The following conclusions hold for all types of the initial particle; henceforth we fix an arbitrary type \(i\) with \(1 \le i \le d\).

From now on, we always assume Conditions \textbf{H1}, \textbf{H3}, and \(\gamma > 0\). Consider the natural filtration defined by \( \mathcal{F}_0 = \sigma(\xi) \) and, for \( n \ge 1 \),
\[
\mathcal{F}_n = \sigma\left(\xi, Y_k, N_{l,k}^r, \, 0 \leq k \leq n-1, \, 1 \leq r \leq d, \, l \ge 1\right).
\]
Recall that for \(1\le i \le d\), the process \((W_n^i)_{n \ge 0}\) is defined as
\begin{equation}\label{def}
W_0^i = 1, \quad W_n^i = \frac{\langle X_n^i, U_{n,\infty} \rangle}{\lambda_{0,n-1}U_{0,\infty}(i)}, \quad n \ge 1.
\end{equation}

The following theorem presents the main results of this paper. It provides a boundedness condition and establishes both almost sure and \(L^1\) convergence of the process \((W_n^i)_{n \ge 0}\) under the measure \(\mathbb{P}_{\xi,Y}\).
\begin{theorem}\label{th1}
$(\mathrm{a})$ For \(\mathbb{P}\)-almost every \(\xi\) and \(Y\), the sequence \((W^i_n, \mathcal{F}_n)\) is a submartingale under \(\mathbb{P}_{\xi,Y}\), and satisfies
\begin{equation}\label{th1e1}
\mathbb{E}_{\xi,Y}W_n^i = 1 + \sum_{k=0}^{n-1} \frac{\langle Y_k,U_{k+1,\infty}\rangle}{{\lambda_{0,k}U_{0,\infty}(i)}}.
\end{equation}
$(\mathrm{b})$ The condition \(\mathbb{E}\log^+ \frac{|Y_0|}{\lambda_0} < \infty\) is necessary and sufficient for \(\sup_{n\ge0}\mathbb{E}_{\xi,Y}(W_n^i) < \infty\) \(\mathbb{P}\)-almost surely. When it holds, the limit \(W^i := \lim_{n \to \infty} W^i_n\) exists \(\mathbb{P}\)-almost surely and takes values in \([0,\infty)\).\\
$(\mathrm{c})$ Assume \(\mathbb{E}\log^+ \frac{|Y_0|}{\lambda_0} < \infty\). Then the following statements are equivalent:
\begin{itemize}
    \item Condition \textbf{H4} is satisfied;
    \item \(W^i\) is non-degenerate, i.e., \(\mathbb{P}(W^i>0) > 0\) and \(\mathbb{E}_{\xi}W^i \ge 1\) \(\mathbb{P}\)-almost surely;
    \item \(\mathbb{E}_{\xi,Y}|W^i - W_n^i| \to 0\) \(\mathbb{P}\)-almost surely.
\end{itemize}
\end{theorem}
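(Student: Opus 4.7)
Part (a) is a direct conditioning computation. Combining the recursion \eqref{e1.1} with the eigenvalue identity $M_nU_{n+1,\infty}=\lambda_nU_{n,\infty}$ from \eqref{rel} yields
\[
\mathbb{E}_{\xi,Y}[\langle X_{n+1}^i,U_{n+1,\infty}\rangle\mid\mathcal{F}_n]=\lambda_n\langle X_n^i,U_{n,\infty}\rangle+\langle Y_n,U_{n+1,\infty}\rangle,
\]
so dividing by $\lambda_{0,n}U_{0,\infty}(i)$ gives $\mathbb{E}_{\xi,Y}[W_{n+1}^i\mid\mathcal{F}_n]=W_n^i+\langle Y_n,U_{n+1,\infty}\rangle/(\lambda_{0,n}U_{0,\infty}(i))$, i.e.\ the submartingale property, and iteration produces \eqref{th1e1}. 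For part (b), Condition \textbf{H3} makes every coordinate of $U_{n,\infty}$ uniformly bounded away from $0$ and from $\infty$, so $\sup_n\mathbb{E}_{\xi,Y}W_n^i<\infty$ $\mathbb{P}$-a.s.\ is equivalent to $\sum_{k\ge 0}|Y_k|/\lambda_{0,k}<\infty$ $\mathbb{P}$-a.s. Writing the general term as $(|Y_k|/\lambda_k)/\lambda_{0,k-1}$ and combining the exponential Lyapunov growth \eqref{1.21} of $\lambda_{0,k-1}$ with the stationarity of $(|Y_k|/\lambda_k)_{k\ge 0}$, sufficiency of $\mathbb{E}\log^+(|Y_0|/\lambda_0)<\infty$ is a first Borel--Cantelli argument, and necessity follows from a quenched second Borel--Cantelli lemma, justified by the conditional independence of $\{Y_k\}$ given $\xi$. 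The $\mathbb{P}$-a.s.\ convergence of $W_n^i$ is then Doob's submartingale convergence theorem under $\mathbb{P}_{\xi,Y}$.

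For (c) the key tool is the decomposition
\[
W_n^i = \tilde{W}_n^i + \sum_{k=0}^{n-1}\hat{W}_n^k,\qquad \hat{W}_n^k:=C_k\,\tilde{W}^{Y_k}_{n-k-1},\qquad C_k:=\frac{\langle Y_k,U_{k+1,\infty}\rangle}{\lambda_{0,k}U_{0,\infty}(i)},
\]
where $\tilde{W}^{Y_k}_m$ is the Grama et al.\ martingale attached to a BPRE in the shifted environment $T^{k+1}\xi$ started from the vector $Y_k$. By the same eigenvalue identity used in (a) each $\hat{W}_n^k$ is a non-negative martingale in $n$ with constant mean $C_k$. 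Under \textbf{H4}, Proposition~\ref{pro1}(b) applies both to $\tilde{W}^i$ and to every rooted subtree $\tilde{W}^{Y_k}$, so $\mathbb{E}_{\xi,Y}\tilde{W}^i=1$ and $\mathbb{E}_{\xi,Y}\hat{W}^k=C_k$; monotone convergence in $k$ together with Scheff\'e's lemma then forces $\mathbb{E}_{\xi,Y}W^i=1+\sum_k C_k=\lim_n\mathbb{E}_{\xi,Y}W_n^i$, which gives the $L^1$ convergence in (iii). The implication (iii)$\Rightarrow$(ii) is immediate: $\mathbb{E}_{\xi,Y}W^i=1+S\ge 1$ yields $\mathbb{E}_\xi W^i\ge 1$ after averaging over $Y$, and $\mathbb{E}_{\xi,Y}W^i>0$ forces $\mathbb{P}(W^i>0)>0$.

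The main obstacle is (ii)$\Rightarrow$(i). When \textbf{H4} fails, Proposition~\ref{pro1}(b) forces $\tilde{W}^i=0$ and $\tilde{W}^{Y_k}_\infty=0$ $\mathbb{P}$-a.s.\ for every $k$, yet each $\hat{W}_n^k$ still has mean $C_k$; hence $I_n^i:=\sum_{k=0}^{n-1}\hat{W}_n^k$ is a growing superposition of non-negative martingales that individually leak all of their $L^1$ mass to infinity, and the difficulty is showing that the limit is nevertheless $0$. The strategy is to split at level $N$: for fixed $N$ the finite sum $\sum_{k=0}^{N-1}\hat{W}_n^k$ converges a.s.\ to $\sum_{k=0}^{N-1}C_k\,\tilde{W}^{Y_k}_\infty=0$ as $n\to\infty$, while the tail $\sum_{k=N}^{n-1}\hat{W}_n^k$ has conditional expectation bounded by $\sum_{k\ge N}C_k$, which is arbitrarily small once $N$ is chosen large because the total series $S=\sum_k C_k$ is a.s.\ finite by (b). A Markov bound then yields $I_n^i\to 0$ in $\mathbb{P}_{\xi,Y}$-probability, so $W^i=\tilde{W}^i+I^i=0$ $\mathbb{P}$-a.s., contradicting the non-degeneracy assumption in (ii).
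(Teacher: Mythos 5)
Your parts (a) and (b) follow the paper's proof essentially verbatim (the conditioning computation with the eigenvalue identity for (a), and comparing $\log^+(|Y_n|/\lambda_n)$ with the linear growth of $\log\lambda_{0,n-1}$ for (b)); the only cosmetic difference is that for the necessity in (b) the paper works directly with the stationary sequence $\phi_n=\log^+(|Y_n|/\lambda_n)$ under $\mathbb{P}$, whereas you phrase it as a quenched second Borel--Cantelli argument, and both lead to $\limsup_n\phi_n/n=\infty$ a.s.

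For part (c) your route is genuinely different from, and in fact cleaner than, the paper's. The paper's proof passes to the limit decomposition $W^i=\tilde W^i+\sum_{j,k,l}(\ldots)$ via Proposition~\ref{pro4}, whose proof relies on the maximal-function bound of Proposition~\ref{pro3} and therefore requires the hypothesis $\kappa(-1)<1$ --- a hypothesis that is \emph{not} in the statement of Theorem~\ref{th1}(c). Moreover, the paper's converse direction invokes the limit decomposition \eqref{2.8} even though that decomposition itself is derived under Condition \textbf{H4}, which makes the argument circular. Your proof stays at the level of the exact finite-$n$ identity \eqref{decompose}; the lower bound $W^i\ge\tilde W^i+\sum_{k<N}\hat W^k$ comes from keeping a finite block and letting $n\to\infty$, and the upper bound $\mathbb{E}_{\xi,Y}W^i\le 1+\sum_kC_k$ is Fatou together with the exact value of $\mathbb{E}_{\xi,Y}W_n^i$ from \eqref{th1e1}. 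For the direction (ii)$\Rightarrow$(i), your split-at-level-$N$ argument --- a.s.\ convergence of the head $\sum_{k<N}\hat W^k_n\to 0$ when \textbf{H4} fails, plus a Markov bound $\mathbb{E}_{\xi,Y}\sum_{N\le k<n}\hat W^k_n\le\sum_{k\ge N}C_k$ on the tail --- yields $I^i_n\to 0$ in probability, hence $W^i=0$ a.s., a clean contradiction. This replaces the paper's reliance on Proposition~\ref{pro4} by a self-contained truncation argument and is valid under exactly the hypotheses stated in Theorem~\ref{th1}(c). The one phrasing I would tighten: ``monotone convergence in $k$ together with Scheff\'e's lemma'' is really the combination of (i) Fatou in the upper bound, (ii) the fixed-$N$ comparison followed by monotone convergence of the increasing partial sums in the lower bound, and (iii) Scheff\'e for nonnegative variables once the limiting means agree; as written it is slightly compressed but the idea is entirely correct.
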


Our result extends the Kesten--Stigum type theorem for multi-type branching processes in a random environment \cite[Theorem 2.9]{grama1} to the setting with immigration. It shows that, under the mild immigration condition \(\mathbb{E}\log^+ (|Y_0|/\lambda_0) < \infty\), the classical condition of the  form \(\mathbb{E}[X \log^+ X] < \infty\) on the offspring distribution   remains both necessary and sufficient for the martingale limit \(W^i\) to be non-degenerate. Furthermore, the non-degeneracy of \(W^i\) is equivalent to the \(L^1\) convergence of the sequence \((W_n^i)\).

On the other hand, Theorem \ref{th1} also generalizes the results of Wang and Liu \cite[Theorem 3.2, 4.3 and 5.2]{wang} and Bansaye \cite[Proposition 1(iii)]{ba}, who studied the single-type branching process in a random environment with immigration. By adopting the martingale construction introduced in \cite{grama1}, we find that the limiting behavior of \(W^i\) in the multi-type case parallels that in the single-type setting.

Using similar arguments and proof techniques with Theorem \ref{th1}, the next two corollaries provide boundedness conditions \(L^1\) convergence of the process \((W_n^i)_{n \ge 0}\) under the measures \(\mathbb{P}_{\xi}\) and \(\mathbb{P}\), respectively.
\begin{corollary}\label{cor1}
$(\mathrm{a})$ If \(\mathbb{E}_{\xi}|Y_0| < \infty\) for \(\mathbb{P}\)-almost every \(\xi\), then \((W_n, \mathcal{F}_n)\) is a submartingale under \(\mathbb{P}_{\xi}\) for almost all \(\xi\), and
\begin{equation}\label{th1e2}
\mathbb{E}_{\xi}W_n^i = 1 + \sum_{k=0}^{n-1} \frac{\langle \mathbb{E}_{\xi}Y_k,U_{k+1,\infty}\rangle}{{\lambda_{0,k}U_{0,\infty}(i)}}.
\end{equation}
Furthermore, \(\sup_{n\ge0}\mathbb{E}_{\xi}(W_n^i) < \infty\) holds \(\mathbb{P}\)-almost surely if and only if \(\mathbb{E}\log^+ \frac{\mathbb{E}_{\xi}|Y_0|}{\lambda_0} < \infty\).\\
$(\mathrm{b})$ Assume \(\mathbb{E}\log^+ \frac{\mathbb{E}_{\xi}|Y_0|}{\lambda_0} < \infty\). Then \(\mathbb{E}_{\xi}|W^i - W_n^i| \to 0\) \(\mathbb{P}\)-almost surely if and only if Condition \textbf{H4} is satisfied.
\end{corollary}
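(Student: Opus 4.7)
The argument parallels the proof of Theorem~\ref{th1}, with $\mathbb{P}_{\xi,Y}$ replaced by $\mathbb{P}_\xi$ so that the immigration vectors $Y_k$ are also averaged.

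\textbf{Part (a).} Conditional on $\xi$, the vectors $(N^r_{\ell,n})_{r,\ell}$ and $Y_n$ are independent of $\mathcal{F}_n$. Using~\eqref{mn},
\[
\mathbb{E}_\xi[\langle X^i_{n+1},U_{n+1,\infty}\rangle\mid\mathcal{F}_n]=\langle X^i_n M_n,U_{n+1,\infty}\rangle+\langle\mathbb{E}_\xi Y_n,U_{n+1,\infty}\rangle,
\]
and~\eqref{rel} rewrites the first summand as $\lambda_n\langle X^i_n,U_{n,\infty}\rangle$. Dividing by $\lambda_{0,n}U_{0,\infty}(i)$ yields the submartingale identity
\[
\mathbb{E}_\xi[W^i_{n+1}\mid\mathcal{F}_n]=W^i_n+\frac{\langle\mathbb{E}_\xi Y_n,U_{n+1,\infty}\rangle}{\lambda_{0,n}U_{0,\infty}(i)},
\]
and iteration gives~\eqref{th1e2}. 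Under Condition~\textbf{H3}, every entry of $U_{n,\infty}$ lies in a deterministic interval $[c,1]$ with $c>0$; hence $\sup_n\mathbb{E}_\xi W^i_n<\infty$ $\mathbb{P}$-a.s.\ iff $\sum_k\mathbb{E}_\xi|Y_k|/\lambda_{0,k}<\infty$ $\mathbb{P}$-a.s. This is precisely the series in Theorem~\ref{th1}(b) with $|Y_k|$ replaced by the quenched mean $\mathbb{E}_\xi|Y_k|$, which depends only on $\xi_k$ and therefore is i.i.d. The Borel--Cantelli argument of Theorem~\ref{th1}(b), combined with $\tfrac{1}{n}\log\lambda_{0,n-1}\to\gamma>0$ from~\eqref{1.21}, then transfers and delivers the equivalence with $\mathbb{E}\log^+(\mathbb{E}_\xi|Y_0|/\lambda_0)<\infty$.

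\textbf{Part (b).} The subtree decomposition reads
\[
W^i_n=\tilde{W}^i_n+\sum_{k=0}^{n-1}\frac{1}{\lambda_{0,k}U_{0,\infty}(i)}\sum_{r=1}^{d}U_{k+1,\infty}(r)\sum_{\ell=1}^{Y_k(r)}\tilde{W}^{(k,r,\ell)}_{n-k-1},
\]
where $\tilde{W}^{(k,r,\ell)}_m$ is the Grama et al.\ martingale for the BPRE launched by the $\ell$-th type-$r$ immigrant at time $k$, read in the shifted environment $(\xi_{k+j})_{j\ge1}$. Under the hypothesis of~(b), part~(a) gives $\sup_n\mathbb{E}_\xi W^i_n<\infty$ $\mathbb{P}$-a.s., so the $\mathbb{P}_\xi$-submartingale convergence theorem yields $W^i_n\to W^i$ $\mathbb{P}_\xi$-a.s. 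If~\textbf{H4} holds, Proposition~\ref{pro1}(b) gives $\mathbb{E}_\xi\tilde{W}^i=\mathbb{E}_\xi\tilde{W}^{(k,r,\ell)}=1$, so passing to the limit term-by-term in the series from part~(a) we obtain $\mathbb{E}_\xi W^i=\lim_n\mathbb{E}_\xi W^i_n$; since $W^i_n\ge 0$ and $W^i_n\to W^i$ pointwise, Scheff\'e's lemma (applied pathwise in $\xi$) yields $\mathbb{E}_\xi|W^i-W^i_n|\to 0$ $\mathbb{P}$-a.s. Conversely, if~\textbf{H4} fails, Proposition~\ref{pro1}(b) forces $\tilde{W}^i=0$ and every $\tilde{W}^{(k,r,\ell)}=0$ $\mathbb{P}$-a.s., so the decomposition gives $W^i=0$ $\mathbb{P}$-a.s.; however the submartingale property of part~(a) entails $\mathbb{E}_\xi W^i_n\ge 1$ for every $n$, and $L^1(\mathbb{P}_\xi)$-convergence would force $\mathbb{E}_\xi W^i\ge 1$, a contradiction.

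\textbf{Main obstacle.} The most delicate step is the series equivalence in part~(a). The pairs $(\mathbb{E}_\xi|Y_k|,\lambda_k)$ form a stationary ergodic but not-independent sequence (each is a function of $T^k\xi$), so the second Borel--Cantelli lemma is not directly available; one must instead use a conditional Borel--Cantelli relative to the environment filtration (as in the proof of Theorem~\ref{th1}(b)), exploiting that $\mathbb{E}_\xi|Y_k|$ depends only on $\xi_k$ while the tail product $\lambda_k\lambda_{k+1}\cdots$ has deterministic geometric growth rate $\gamma>0$. Once that lemma is secured, the rest of the corollary is a quenched analogue of the proof of Theorem~\ref{th1}.
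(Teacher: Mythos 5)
Your proposal follows the paper's route for Corollary~\ref{cor1} essentially verbatim: derive the $\mathbb{P}_{\xi}$-submartingale identity by averaging the immigration term, iterate to get \eqref{th1e2}, and then transplant the boundedness criterion and $L^1$-convergence argument from Theorem~\ref{th1}, with $\mathbb{E}_{\xi}Y_k$ in place of $Y_k$. Your additional observations --- that Condition~\textbf{H3} confines every entry of $U_{n,\infty}$ to a deterministic interval $[c,1]$ (needed to pass between $\langle Y_k,U_{k+1,\infty}\rangle$ and $|Y_k|$ in both directions), and that the second Borel--Cantelli step is applied to a stationary but non-independent sequence --- are correct refinements of points the paper treats tersely; the latter concern is already present in the paper's own proof of Theorem~\ref{th1}(b), so it is inherited rather than introduced by your argument.
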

\begin{remark}
   Observe that the function $\log^+ x$ can be slightly modified (for instance, as in \cite[(5.4)]{wang}) to become concave on $[0,\infty)$. Using such a concave modification, one can show that the condition
$\mathbb{E}\log^+ \frac{\mathbb{E}_{\xi}|Y_0|}{\lambda_0} < \infty$ implies
$\mathbb{E}\log^+ \frac{|Y_0|}{\lambda_0} < \infty.$ Consequently, the uniform boundedness of $(W_n^i)$ under $\mathbb{P}_{\xi}$ implies its uniform boundedness under $\mathbb{P}_{\xi,Y}$.
\end{remark}
\begin{corollary}\label{cor2}
$(\mathrm{a})$ If $-1\in I$, \(\kappa(-1) < 1\) and \(\mathbb{E}\frac{|Y_0|}{\lambda_0} < \infty\), then \((W_n, \mathcal{F}_n)\) is a submartingale under \(\mathbb{P}\) and satisfies
\begin{equation}\label{th1e3}
\mathbb{E}W_n^i = 1 + \sum_{k=0}^{n-1} \mathbb{E}\left( \frac{\langle Y_k,U_{k+1,\infty}\rangle}{{\lambda_{0,k}U_{0,\infty}(i)}} \right) < \infty.
\end{equation}
Conversely, if \(\sup_{n\ge0}\mathbb{E}(W_n^i) < \infty\), then necessarily \(\kappa(-1) \le 1\) and \(\mathbb{E}\frac{|Y_0|}{\lambda_0} < \infty\).\\
$(\mathrm{b})$ Assume $-1\in I$, \(\kappa(-1) < 1\) and \(\mathbb{E}\frac{|Y_0|}{\lambda_0} < \infty\). Then \(\mathbb{E}|W^i - W_n^i| \to 0\) if and only if Condition \textbf{H4} holds.
\end{corollary}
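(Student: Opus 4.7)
The plan for part (a) is to derive the claims as annealed consequences of Theorem \ref{th1}(a) together with the spectral estimate \eqref{2.2}. The $\mathbb{P}$-submartingale property follows from the $\mathbb{P}_{\xi,Y}$-submartingale property by the tower property, and the formula \eqref{th1e3} for $\mathbb{E}W_n^i$ follows by taking $\mathbb{E}$ in the identity \eqref{th1e1} of Theorem \ref{th1}(a). The substantive step is to bound
$$a_k := \mathbb{E}\left[\frac{\langle Y_k, U_{k+1,\infty}\rangle}{\lambda_{0,k}U_{0,\infty}(i)}\right].$$
Using $\|U_{k+1,\infty}\|=1$ and the Hennion identity $\lambda_{0,k-1}U_{0,\infty} = M_{0,k-1}U_{k,\infty}$, I rewrite
$$a_k \le \mathbb{E}\left[\frac{|Y_k|}{\lambda_k \cdot (M_{0,k-1}U_{k,\infty})(i)}\right].$$
By the i.i.d.\ structure of $\xi$, $M_{0,k-1}$ is independent of the block $(Y_k, \lambda_k, U_{k,\infty})$. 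Conditioning on the latter and using the H3-based comparison $(M_{0,k-1}u)(i) \ge c\|M_{0,k-1}\|$, valid for every non-negative unit vector $u$, together with \eqref{2.2} at $s=-1$, gives $\mathbb{E}[(M_{0,k-1}U_{k,\infty})(i)^{-1} \mid U_{k,\infty}] \le C\kappa(-1)^k$. Combined with the stationarity $(Y_k,\lambda_k) \stackrel{d}{=} (Y_0,\lambda_0)$ inherited from the i.i.d.\ environment, this yields $a_k \le C\kappa(-1)^k \mathbb{E}[|Y_0|/\lambda_0]$, which is summable when $\kappa(-1)<1$. For the converse, finiteness of the $k=0$ summand together with the H3 lower bound $U_{1,\infty}(r) \ge c>0$ and $U_{0,\infty}(i) \le 1$ forces $\mathbb{E}[|Y_0|/\lambda_0]<\infty$, and a matching lower bound for $a_k$ (using the analogous upper bound on $(M_{0,k-1}u)(i)$ and the definition of $\kappa(-1)$) forces $\kappa(-1) \le 1$ whenever the immigration is nontrivial.

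For part (b), the ``if'' direction is a lift of the quenched $L^1$-convergence in Theorem \ref{th1}(c). Under H4, $\mathbb{E}_{\xi,Y}|W^i - W_n^i| \to 0$ $\mathbb{P}$-a.s.; since
$$\mathbb{E}_{\xi,Y}|W^i - W_n^i| \le 2\sup_m \mathbb{E}_{\xi,Y}W_m^i = 2\left(1 + \sum_{k=0}^\infty \frac{\langle Y_k, U_{k+1,\infty}\rangle}{\lambda_{0,k}U_{0,\infty}(i)}\right),$$
and the right-hand side is $\mathbb{P}$-integrable by part (a), the dominated convergence theorem yields $\mathbb{E}|W^i - W_n^i|\to 0$. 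For the ``only if'' direction, $L^1(\mathbb{P})$-convergence implies uniform integrability of $\{W_n^i\}$; since $X_n^i \ge Z_n^i$ componentwise and $U_{n,\infty}$ has non-negative entries, $\tilde{W}_n^i \le W_n^i$, so $\{\tilde{W}_n^i\}$ is also $\mathbb{P}$-uniformly integrable. Combined with the a.s.\ convergence from Proposition \ref{pro1}(a), this gives $\tilde{W}_n^i \to \tilde{W}^i$ in $L^1(\mathbb{P})$, whence $\mathbb{E}\tilde{W}^i = 1$. Fatou on the quenched martingale gives $\mathbb{E}_\xi\tilde{W}^i \le 1$ $\mathbb{P}$-a.s., and combining with $\mathbb{E}[\mathbb{E}_\xi\tilde{W}^i]=1$ forces $\mathbb{E}_\xi\tilde{W}^i=1$ $\mathbb{P}$-a.s., which by Proposition \ref{pro1}(b) is equivalent to H4.

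The main technical obstacle is the H3-based comparison $(M_{0,k-1}u)(i)\gtrsim \|M_{0,k-1}\|$, uniform in $k$ and in non-negative unit vectors $u$; this rests on the fact that H3 propagates through finite matrix products, so that the entry ratio of $M_{0,k-1}$ stays bounded by a constant depending only on $D$ and not on $k$. Once this comparison is in hand, the rest amounts to combining the stationarity of $\xi$ with the annealed spectral estimates \eqref{2.2} and the quenched statements of Theorem \ref{th1}.
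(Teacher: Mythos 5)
Your proposal is correct throughout, and it largely tracks the structure of the paper's argument but with two noteworthy deviations. In part (a), the paper simply cites the second inequality of \eqref{2.2} at $s=-1$, namely $\mathbb{E}_{T^k\xi}(\lambda_{0,k-1}U_{0,\infty}(i))^{-1}\le C\kappa(-1)^k$ and its lower counterpart $\ge \mathbb{E}\|M_{0,k-1}\|^{-1}$, whereas you re-derive these from scratch via the observation that H3 propagates through products (entry ratio of $M_{0,k-1}$ bounded by $D^2$, since row ratios inherit the bound from the last factor and column ratios from the first) and then invoke \eqref{2.2} only through the norm $\|M_{0,k-1}\|$. Both routes are valid; the paper's is shorter since the relevant estimate is already packaged in \eqref{2.2}, but your explicit H3-propagation argument correctly justifies the uniform lower bound $U_{1,\infty}(r)\ge c>0$ that the paper uses somewhat tacitly in the converse step. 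In part (b), the "if" direction is a cosmetic variant (you apply dominated convergence with the $\mathbb{P}$-integrable envelope $2\sup_m\mathbb{E}_{\xi,Y}W_m^i$; the paper applies the extended Scheff\'e lemma as in Theorem \ref{th1}(c)). Your "only if" direction, however, is genuinely different and arguably cleaner: you observe that $X_n^i\ge Z_n^i$ componentwise forces $\tilde{W}_n^i\le W_n^i$, so $L^1(\mathbb{P})$-convergence of $(W_n^i)$ transfers uniform integrability to $(\tilde{W}_n^i)$, giving $\mathbb{E}\tilde{W}^i=1$ and hence $\mathbb{E}_\xi\tilde{W}^i=1$ a.s.\ by Fatou, whence H4 follows from Proposition \ref{pro1}(b). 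The paper instead pushes this through the decomposition \eqref{2.8} of Proposition \ref{pro4}, which requires that proposition's hypotheses; your domination argument bypasses the decomposition entirely and is more self-contained.
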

\begin{remark}
 Differing from the single-type setting, we obtain only the weaker condition $\kappa(-1) \le 1$ (instead of $\kappa(-1) < 1$) as necessary for the uniform boundedness of $(W^i_n)$ under $\mbb{P}$. This gap originates from the fact that, from equation \eqref{2.2}, we have the upper bound $\mathbb{E} \lambda_{0,n-1}^{s} \le C_s (\kappa(s))^n$ but lack a comparable lower bound. This defect will also appear in the subsequent $L^p$ convergence conclusions.
\end{remark}

 The $L^p~(1<p<\infty)$  convergence of BPRE without immigration (Proposition \ref{pro2}) plays a key role in studying the asymptotic behavior (e.g., large deviations and Berry--Esseen bounds) of \((\tilde{W}_n^i)\), as can be seen in the preprints \cite{grama3, grama4}. Motivated by this, we generalize the $L^p$ convergence result to the process \((W_n^i)\) involving immigration.

\begin{theorem}\label{th4}
Assume $\mathbb{P}(|Y_0|=0) < 1$, $1<p<\infty$ and $-p\in I$. If the conditions
\begin{equation}\label{2.5}
    \kappa(-p) < 1, \quad \max_{1\le i,j \le d} \mathbb{E}\left( \frac{Z_1^{i}(j)}{M_0(i,j)} \right)^{p} < \infty, \quad \text{and} \quad \mathbb{E}\frac{|Y_0|^p}{\lambda_0^p} < \infty
\end{equation}
hold, then
\[
\sup_{n \ge 0} \| W_n^i \|_p < \infty \quad \text{and} \quad \lim_{n \to \infty} \| W_n^i - W^i \|_p = 0.
\]
Conversely, if \(\sup_{n \ge 0} \| W_n^i \|_p < \infty\), then \begin{equation}\label{12.6}
    \kappa(-p) \le 1, \quad \kappa(1-p)<1 \quad \text{and} \quad \mathbb{E}\frac{|Y_0|^p}{\lambda_0^p} < \infty.
\end{equation}

\end{theorem}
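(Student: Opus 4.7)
My plan hinges on the decomposition
\[
W_n^i = \tilde W_n^i + R_n^i, \qquad R_n^i := \sum_{k=0}^{n-1}\frac{1}{\lambda_{0,k}U_{0,\infty}(i)}\sum_{r=1}^d U_{k+1,\infty}(r)\sum_{l=1}^{Y_k(r)}\tilde W^{r,l}_{k+1,n},
\]
obtained from \eqref{mul} by recognising that each type-$r$ immigrant arriving at generation $k+1$ launches an independent subtree whose own normalised martingale in the shifted environment is $\tilde W^{r,l}_{k+1,n}$. Two facts will be used throughout: $R_n^i\ge 0$ (so $W_n^i\ge\tilde W_n^i$ pointwise), and, conditional on $(\xi,Y)$, the family $\{\tilde W^{r,l}_{k+1,n}\}$ is independent across $(k,r,l)$ with conditional mean $1$.

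For the sufficient direction, log-convexity of $\log\kappa$ together with $\kappa(0)=1$ turns $\kappa(-p)<1$ into $\kappa(1-p)<1$, so Proposition~\ref{pro2} yields $\sup_n\|\tilde W_n^i\|_p<\infty$. To bound $\sup_n\|R_n^i\|_p$ I apply Minkowski in $k$, and for each $k$ I work conditionally on $(\xi,Y)$: after centering the inner sum, Rosenthal's inequality (for $p\ge 2$) or the von Bahr--Esseen inequality (for $1<p<2$) controls the $|Y_k|$-term centered sum in terms of the quenched moment $A_p(\xi):=\sup_n\mbb E_\xi|\tilde W^r_{k+1,n}-1|^p$, which is $\mbb P$-a.s.\ finite and $\mbb P$-integrable because $|\tilde W|^p$ is a $\mbb P_\xi$-submartingale. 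Condition~\textbf{H3} plus the Hennion--Birkhoff theory supplies the uniform two-sided equivalences $\lambda_{0,k-1}\asymp\|M_{0,k-1}\|$, $\lambda_k\asymp\|M_k\|$ and $U_{n,\infty}\ge c\bm 1$; using these to replace the eigenvector-dependent quantities by surrogates living on disjoint blocks of $\xi$, the expectation factorises into independent pieces, and \eqref{2.2} provides $\mbb E\|M_{0,k-1}\|^{-p}\le C\kappa(-p)^{k-1}$. With $\mbb E(|Y_0|/\lambda_0)^p<\infty$ and the integer bound $|Y_k|^{p/(p\wedge 2)}\le|Y_k|^p$ this produces a geometric series, hence $\sup_n\|W_n^i\|_p<\infty$. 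Since the hypotheses of Theorem~\ref{th4} imply those of Corollary~\ref{cor2}, $(W_n^i)$ is a $\mbb P$-submartingale, and Doob's $L^p$-maximal inequality combined with dominated convergence (using the a.s.\ limit from Theorem~\ref{th1}) delivers $\|W_n^i-W^i\|_p\to 0$.

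For the converse, $W_n^i\ge\tilde W_n^i$ forces $\sup_n\|\tilde W_n^i\|_p<\infty$; since $-p\in I$ and $I$ is an interval containing $0$, one has $1-p\in I$, and the ``only if'' half of Proposition~\ref{pro2} returns both $\kappa(1-p)<1$ and the offspring moment condition. Next, from $W_1^i=\tilde W_1^i+\langle Y_0,U_{1,\infty}\rangle/(\lambda_0 U_{0,\infty}(i))$ and the H3-uniform lower bound on the entries of $U_{1,\infty}$, one extracts $\mbb E(|Y_0|/\lambda_0)^p<\infty$. For $\kappa(-p)\le 1$, set $S_k:=\lim_n$(of the $k$-th summand of $R_n^i$); the bound $W^i\ge S_k$ plus conditional Jensen gives
\[
\mbb E(W^i)^p\ge\mbb E S_k^p\ge\mbb E\bigl(\mbb E[S_k\mid\xi,Y]\bigr)^p\ge c\,\mbb E[q(\xi_k)\lambda_{0,k}^{-p}],
\]
where $q(\xi_k)=\mbb P(|Y_k|\ge 1\mid\xi_k)$ and the last step uses H3 to bound $\langle Y_k,U_{k+1,\infty}\rangle\ge c|Y_k|\ge c\mathbf 1_{|Y_k|\ge 1}$. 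The same H3-factorisation as in the sufficient direction reduces the last expectation to $c'\,\mbb E\|M_{0,k-1}\|^{-p}\cdot\mbb E[q(\xi_0)\|M_0\|^{-p}]$; the second factor lies in $(0,\infty)$ because $\mbb P(|Y_0|=0)<1$ and $-p\in I$. The uniform boundedness $\mbb E S_k^p\le\mbb E(W^i)^p<\infty$ then yields $\sup_k\mbb E\|M_{0,k-1}\|^{-p}<\infty$, which by \eqref{2.1} (together with $\|M_{0,k-1}\|\ge\lambda_{0,k-1}$) forces $\kappa(-p)\le 1$.

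The main obstacle is the interlocking $\xi$-dependence of $\lambda_{0,k}$, $\lambda_k$, $U_{k+1,\infty}$ and $Y_k$, which share overlapping blocks of the environment. The whole proof rests on using Condition~\textbf{H3} and the Birkhoff-contraction property for positive matrices to replace the eigenvector-dependent $\lambda_{0,k-1}$, $\lambda_k$ by the matrix-norm surrogates $\|M_{0,k-1}\|$, $\|M_k\|$ with uniform two-sided constants; this decouples the three blocks $\xi_{[0,k-1]}$, $\xi_k$, $\xi_{[k+1,\infty)}$. Making that substitution rigorous, and simultaneously controlling the quenched moment $A_p(\xi)$, is the technical crux; once achieved, both directions reduce to a geometric-series estimate.
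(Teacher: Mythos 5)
Your proposal uses the same decomposition \eqref{decompose} of $W_n^i$ that the paper derives from the genealogical-tree construction of Section 4, so the overall architecture is identical: bound $\tilde W_n^i$ via Proposition \ref{pro2} (you use log-convexity of $s\mapsto\log\kappa(s)$ where the paper uses H\"{o}lder to get $\kappa(1-p)<1$ from $\kappa(-p)<1$, which is the same thing), then control the immigrant series separately. The difference lies entirely in how the series is estimated. The paper avoids centering: it replaces each partial martingale $\tilde W_{n-j}[\cdot]$ by the maximal function $\tilde W^{\ast}[\cdot]$, whose $p$-th moment is finite by Doob's $L^p$-maximal inequality once Proposition~\ref{pro2} supplies $\sup_n\|\tilde W_n^i\|_p<\infty$, and then applies the elementary bound $(\sum_{l=1}^m x_l)^p\le m^{p-1}\sum_l x_l^p$ together with the quenched independence of $Y_{j-1}$ and $\{\tilde W^{\ast}[(k,0_{j-1}l)]\}_l$. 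Crucially, the factorisation of $\mathbb E[(\lambda_{0,j-2}U_{0,\infty}(i))^{-p}\cdot(\text{tail term})]$ is obtained by conditioning on $T^{j-1}\xi$ and invoking \eqref{2.2} directly; no matrix-norm surrogate $\lambda_{0,k-1}\asymp\|M_{0,k-1}\|$ or entrywise lower bound $U_{n,\infty}\ge c\mathbf 1$ is used. You instead center the inner sum, invoke Rosenthal or von Bahr--Esseen, verify that the quenched moment $A_p(\xi)$ is integrable, and decouple the overlapping $\xi$-blocks through the \textbf{H3}-based uniform two-sided comparisons. All of this is correct under \textbf{H3} (which does give those uniform comparisons and the lower bound on $U_{n,\infty}$), but you are essentially reproving, in a heavier form, what the paper simply imports from \cite[Lemma~3.2 \& (4.10)]{grama2} through \eqref{2.2}; the sharper martingale inequalities buy nothing because the conditions in \eqref{2.5} are already the natural ones for the crude power-mean bound, and you still have to estimate the uncentred mean piece $\langle Y_{j-1},U_{j,\infty}\rangle/(\lambda_{0,j-1}U_{0,\infty}(i))$ by exactly the same type of argument. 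The same remark applies to the converse: your $q(\xi_k)$-based factorisation produces the same lower bound that the paper gets from superadditivity of $x\mapsto x^p$, conditional Jensen (to replace $\sum_l\tilde W_{n-j}[\cdot]$ by $Y_{j-1}(k)$), and the lower half of \eqref{2.2}, with noticeably less bookkeeping. In short, the proposal is correct in both directions but substantially more technical than necessary; the simplification you miss is that conditioning on $T^{j-1}\xi$ via \eqref{2.2} already handles the entanglement of $U_{0,\infty}$ and $\lambda_{0,j-2}$ with the later environment, making the surrogate replacement and the Rosenthal machinery superfluous.
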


\begin{remark}
    Compared with Proposition \ref{pro2}, the theorem above shows that, in addition to imposing a restriction on the number of immigrants, we require a stronger condition on the offspring distribution, specifically $\kappa(-p) < 1$ (note that $\kappa(-p) < 1$ implies $\kappa(1-p) < 1$ by Section \ref{sec5.2}), to establish the $L^p$ convergence of $W^i_n$. This indicates that, unlike almost sure convergence, the coming of immigration significantly affects the $L^p$ convergence of the process. Furthermore, this result generalizes the conclusion of Wang \cite[Theorem 6.2]{wang} in the single-type case, and our finding runs parallel to theirs.
\end{remark}
Finally, we establish the $L^p~(0<p<1)$  boundedness for the process \((W_n^i)\) involving immigration. Notice that, in the case without immigration, the process \((\tilde{W}_n^i)\) is automatically bounded by the fact that $\mbb{E}\tilde{W}_n^i=1$.
\begin{theorem}\label{th5}
Assume $\mathbb{P}(|Y_0|=0) < 1$, $0<p < 1$ and $-p\in I$. If the conditions
\begin{equation}\label{2.6}
    \kappa(-p) < 1 \quad \text{and} \quad \mathbb{E}\frac{|Y_0|^p}{\lambda_0^p} < \infty
\end{equation}
hold, then $\sup_{n \ge 0} \| W_n^i \|_p < \infty$. Conversely, if \(\sup_{n \ge 0} \| W_n^i \|_p < \infty\), then \eqref{2.6} holds with the first inequality relaxed to \(\kappa(-p) \le 1\).
\end{theorem}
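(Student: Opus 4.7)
The plan is to leverage the additive decomposition~\eqref{mul}, which at generation $n$ takes the form
\[
W_n^i \;=\; \tilde{W}_n^i \,+\, \sum_{k=0}^{n-1}\frac{\langle \hat{Z}_{k,n},\,U_{n,\infty}\rangle}{\lambda_{0,n-1}\,U_{0,\infty}(i)},
\]
where $\hat{Z}_{k,n}$ denotes the generation-$n$ population descended from the immigrant vector $Y_k$ (so in particular $\hat{Z}_{n-1,n}=Y_{n-1}$). Because $0<p<1$, subadditivity $(\sum a_j)^p\le\sum a_j^p$ yields
\[
\mathbb{E}(W_n^i)^p \;\le\; \mathbb{E}(\tilde{W}_n^i)^p \,+\, \sum_{k=0}^{n-1}\mathbb{E}\!\left(\frac{\langle \hat{Z}_{k,n},\,U_{n,\infty}\rangle}{\lambda_{0,n-1}\,U_{0,\infty}(i)}\right)^{\!p},
\]
and Jensen's inequality combined with the martingale identity $\mathbb{E}\tilde{W}_n^i=1$ (Proposition~\ref{pro1}) gives $\mathbb{E}(\tilde{W}_n^i)^p\le 1$. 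The task therefore reduces to bounding each immigrant summand.

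For the sufficient direction, the key observation is that, conditionally on $\sigma(\xi,Y_k)$, the process $(\hat{Z}_{k,m})_{m\ge k+1}$ is a multi-type BPRE launched by $Y_k$ at time $k+1$; iterating Hennion's relation~\eqref{rel} yields
\[
\mathbb{E}_\xi\!\left[\langle \hat{Z}_{k,n},\,U_{n,\infty}\rangle\,\big|\,Y_k\right]\;=\;\lambda_{k+1,n-1}\,\langle Y_k,\,U_{k+1,\infty}\rangle.
\]
A second, conditional, application of Jensen to the concave map $x\mapsto x^p$, combined with the factorization $\lambda_{0,n-1}=\lambda_{0,k}\lambda_{k+1,n-1}$, the bound $\langle Y_k,U_{k+1,\infty}\rangle\le|Y_k|$, and the fact that Condition~\textbf{H3} forces $U_{0,\infty}(i)\ge c>0$, reduces each summand to the form $C\,\mathbb{E}[|Y_k|^p/\lambda_{0,k}^p]$. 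I then use H3 once more: uniform positivity of the entries of the Perron eigenvectors $U_{k,\infty}$ yields a deterministic comparison $\lambda_{0,k-1}\ge c_0\|M_{0,k-1}\|$. Since $\|M_{0,k-1}\|$ depends only on $\xi_0,\dots,\xi_{k-1}$, while $(Y_k,\lambda_k)$ is measurable with respect to $T^k\xi$, the i.i.d.\ structure of the environment together with stationarity yields
\[
\mathbb{E}\!\left[\frac{|Y_k|^p}{\lambda_{0,k}^p}\right] \;\le\; C\,\mathbb{E}\|M_{0,k-1}\|^{-p}\,\mathbb{E}\!\left[\frac{|Y_0|^p}{\lambda_0^p}\right] \;\le\; C'\,\kappa(-p)^{k}\,\mathbb{E}\!\left[\frac{|Y_0|^p}{\lambda_0^p}\right],
\]
the last inequality being~\eqref{2.2} with $s=-p\in I$. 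Summability of the resulting geometric series requires exactly $\kappa(-p)<1$ and $\mathbb{E}|Y_0|^p/\lambda_0^p<\infty$, from which $\sup_n\|W_n^i\|_p<\infty$ follows.

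For the converse, I retain only the $k=n-1$ term in the decomposition. Since $\hat{Z}_{n-1,n}=Y_{n-1}$, the H3 lower bound on the entries of $U_{n,\infty}$ and $U_{0,\infty}(i)\le 1$ give
\[
W_n^i \;\ge\; \frac{\langle Y_{n-1},\,U_{n,\infty}\rangle}{\lambda_{0,n-1}\,U_{0,\infty}(i)} \;\ge\; c\,\frac{|Y_{n-1}|}{\lambda_{0,n-1}}.
\]
Taking $p$-th powers and $n=1$ already forces $\mathbb{E}|Y_0|^p/\lambda_0^p<\infty$. For $n\ge 2$ the symmetric decoupling argument (using now the upper bound $\lambda_{0,n-2}\le\|M_{0,n-2}\|$) together with stationarity yields
\[
\mathbb{E}(W_n^i)^p \;\ge\; c'\,\mathbb{E}\|M_{0,n-2}\|^{-p}\cdot\mathbb{E}\!\left[\frac{|Y_0|^p}{\lambda_0^p}\right].
\]
The hypothesis $\mathbb{P}(|Y_0|=0)<1$ makes the last factor strictly positive (and, by the first step, finite), so a uniform bound on $\mathbb{E}(W_n^i)^p$ forces $\sup_n\mathbb{E}\|M_{0,n-1}\|^{-p}<\infty$, whence $\kappa(-p)=\lim_n(\mathbb{E}\|M_{0,n-1}\|^{-p})^{1/n}\le 1$ by~\eqref{2.1}.

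The main technical hurdle I anticipate is making the decoupling step rigorous: establishing the two-sided comparison $\lambda_{0,k-1}\asymp\|M_{0,k-1}\|$ under Condition~\textbf{H3} with constants uniform in $k$ (which reduces to a uniform positive lower bound on the entries of the limiting Perron eigenvectors $U_{k,\infty}$), and carefully tracking that $(Y_k,\lambda_k)$ is a function of $T^k\xi$ only, so that its joint law does not depend on $k$ and it is genuinely independent of $\sigma(\xi_0,\dots,\xi_{k-1})$. The unavoidable gap between the sufficient condition $\kappa(-p)<1$ and the necessary condition $\kappa(-p)\le 1$ stems from the one-sidedness of~\eqref{2.2}, which provides only an upper bound on $\mathbb{E}\|M_{0,n-1}\|^{-p}$; this mirrors the asymmetry already noted in the remark following Theorem~\ref{th4}.
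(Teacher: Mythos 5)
Your proof is correct and follows essentially the same strategy as the paper: decompose $W_n^i$ into the initial particle's martingale $\tilde{W}_n^i$ plus the immigrant contributions, apply subadditivity of $x\mapsto x^p$ ($0<p<1$) to pass to individual summands, use Jensen's inequality and $\mathbb{E}_\xi\tilde{W}_m[\cdot]=1$ to kill the intrinsic randomness of each immigrant subtree, and then decouple the environment to extract a geometric series in $\kappa(-p)$; the converse retains only the freshest immigrant term $Y_{n-1}$ and runs the same decoupling in reverse. Your decomposition $W_n^i=\tilde{W}_n^i+\sum_{k}\langle\hat{Z}_{k,n},U_{n,\infty}\rangle/(\lambda_{0,n-1}U_{0,\infty}(i))$ is exactly the paper's \eqref{decompose}, just written without the explicit tree labels of Section 4.2, and your iterated use of \eqref{rel} to compute $\mathbb{E}_\xi[\langle\hat{Z}_{k,n},U_{n,\infty}\rangle\,|\,Y_k]$ matches what the tree martingales encode.

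The one place you deviate is the decoupling mechanism, and your instinct that this is the delicate point is right but the worry is resolvable. The paper avoids any deterministic comparison: it factors $\lambda_{0,j-1}U_{0,\infty}(i)$ as $\lambda_{0,j-2}U_{0,\infty}(i)\cdot\lambda_{j-1}$, observes that the second block $\lambda_{j-1}^{-1}\sum_l\tilde{W}_{n-j}[(k,0_{j-1}l)]$ is $T^{j-1}\xi$-measurable, conditions on $T^{j-1}\xi$, and invokes the conditional bound $\mathbb{E}_{T^{j-1}\xi}(\lambda_{0,j-2}U_{0,\infty}(i))^{-p}\le C\kappa(-p)^{j-1}$ from \eqref{2.2} directly. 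You instead replace $\lambda_{0,k-1}$ by $\|M_{0,k-1}\|$ using a pathwise two-sided comparison $c_0\|M_{0,k-1}\|\le\lambda_{0,k-1}\le\|M_{0,k-1}\|$, which turns the conditional factorization into a genuine product of independent expectations. This comparison does hold under \textbf{H3}: since the entry ratio of $M_{0,n-1}$ is bounded by $D^2$ uniformly in $n$, any normalized Perron-type vector $U_{m,\infty}$ has all entries in $[1/(dD^2),\,1]$, whence $\lambda_{0,k-1}=\|M_{0,k-1}U_{k,\infty}\|\ge(dD^2)^{-1}\sum_{i,j}M_{0,k-1}(i,j)\ge(dD^2)^{-1}\|M_{0,k-1}\|$, while $\lambda_{0,k-1}\le\|M_{0,k-1}\|$ is immediate from $\|U_{k,\infty}\|=1$. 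The upshot is a slightly more elementary but entirely equivalent argument; both routes ultimately rest on \eqref{2.2} with $s=-p$. Your necessity argument, including the use of $\mathbb{P}(|Y_0|=0)<1$ to ensure the factor $\mathbb{E}|Y_0|^p/\lambda_0^p$ is strictly positive before dividing through, coincides with the paper's.
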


\section{Auxiliary results}
\subsection{The Maximal function of the martingale $(\tilde{W}^i_n)$}
Recall that for the multi-type BPRE $\{Z_n^i=(Z^i_n(j))_{1\le j \le d}: n \ge 0\}$ with $Z_0^i = e_i$, the process
\[
\tilde{W}_0^i = 1, \qquad \tilde{W}_n^i = \frac{\langle Z_n^i, U_{n,\infty} \rangle}{\lambda_{0,n-1} U_{0,\infty}(i)}, \quad n \ge 1,
\]
is, by Proposition \ref{pro1}, a non-negative martingale under both $\mathbb{P}_{\xi}$ and $\mathbb{P}$ with respect to the filtration $(\tilde{\mathcal{F}}_n)_{n \ge 0}$, where
\[
\tilde{\mathcal{F}}_n = \sigma\left( \xi, \; N_{l,k}^r: 0 \leq k \leq n-1, \; 1 \leq r \leq d, \; l \geq 1 \right).
\]

Define $\tilde{W}^{i\ast}=\sup_{n\ge0}\tilde{W}_n^{i}$ for $1\le i \le d$. As an extension for \cite[Theorem 1.3]{Liu2}, we give a sufficient condition for the boundedness of $\tilde{W}^{i*}$, which is crucial in extending the Kesten--Stigum theorem to multi-type branching processes in random environments with immigration.

\begin{proposition}\label{pro3}
    Assume Conditions \textbf{H1}, \textbf{H3} and $\gamma > 0$. If $\kappa(-1)<1$ and Condition \textbf{H4} is satisfied, then
    \[
    \mathbb{E}[ \tilde{W}^{i*} ] < \infty \quad \text{for all } 1 \le i \le d.
    \]
\end{proposition}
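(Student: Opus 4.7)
My plan is to reduce the proposition, via Doob's $L\log L$ maximal inequality, to an $L\log L$ moment bound on the martingale limit $\tilde W^i$, and then to obtain that bound through a size-biased (Lyons--Pemantle--Peres) spine construction adapted to the multi-type random-environment setting of Grama et al.

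\emph{Step 1 (reduction to $\mathbb E[\tilde W^i\log^+\tilde W^i]<\infty$).} Under H1, H3, $\gamma>0$ and H4, Proposition~\ref{pro1}(b) yields $\tilde W_n^i\to\tilde W^i$ both almost surely and in $L^1(\mathbb P)$, so that $\tilde W_n^i=\mathbb E[\tilde W^i\mid\tilde{\cf}_n]$. Doob's $L\log L$ maximal inequality for non-negative martingales then gives
\[
\mathbb E\!\left[\sup_{0\le n\le N}\tilde W_n^i\right]\le\frac{e}{e-1}\bigl(1+\mathbb E[\tilde W_N^i\log^+\tilde W_N^i]\bigr),
\]
and convexity of $x\mapsto x\log^+ x$ combined with conditional Jensen bounds the right-hand side uniformly in $N$ by $\frac{e}{e-1}(1+\mathbb E[\tilde W^i\log^+\tilde W^i])$. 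Monotone convergence in $N$ therefore reduces the proposition to showing $\mathbb E[\tilde W^i\log^+\tilde W^i]<\infty$.

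\emph{Step 2 (spine decomposition and moment bound).} Following Lyons--Pemantle--Peres, I would introduce a size-biased law $\hat{\mathbb P}$ on an enlarged probability space carrying, besides the environment $\xi$ and the tree, a distinguished infinite ray (the spine) with types $(i_n)_{n\ge 0}$, $i_0=i$, characterized by $\hat{\mathbb P}(A_n)=\mathbb E[\tilde W_n^i\mathbf 1_{A_n}]$ for every $A_n\in\tilde{\cf}_n$. Under $\hat{\mathbb P}$, as in the multi-type adaptation of \cite[Section~4]{grama1}, the spine type is a Markov chain with transitions proportional to $M_n(i_n,j)U_{n+1,\infty}(j)/(\lambda_n U_{n,\infty}(i_n))$, the spine particle reproduces under the size-biased offspring law with bias $\langle N^{i_n}_{1,n},U_{n+1,\infty}\rangle/(\lambda_n U_{n,\infty}(i_n))$, and the non-spine siblings initiate independent BPREs in the shifted environments. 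The standard tilt identity yields $\mathbb E[\tilde W^i\log^+\tilde W^i]=\hat{\mathbb E}[\log^+\tilde W^i]$, turning the target into a \emph{logarithmic} moment under $\hat{\mathbb P}$. Iterating the one-step recursion $\tilde W^i=\sum_{r,\ell}\frac{U_{1,\infty}(r)}{\lambda_0 U_{0,\infty}(i)}\tilde W^{r,\ell}(T\xi)$ along the spine, and using the elementary estimates $\log^+(ab)\le \log^+ a+\log^+ b$ and $\log^+\sum_j x_j\le \log^+\max_j x_j+\log\#\{j\}$ together with H3 (which via \cite[Proposition~2.3]{grama1} keeps $U_{n,\infty}$ bounded away from $0$ and $\infty$), one reduces $\hat{\mathbb E}[\log^+\tilde W^i]$ to quantities comparable to $\hat{\mathbb E}[\log^+\lambda_0^{-1}]$ and $\hat{\mathbb E}[\log^+|\hat N_0|]$. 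The first is finite because $\kappa(-1)<1$ and \eqref{2.2} at $s=-1$ give $\mathbb E[\lambda_0^{-1}]<\infty$; the second, after unwinding its Radon--Nikodym derivative, becomes a sum of terms comparable to $\mathbb E[(Z_1^i(j)/M_0(i,j))\log^+(Z_1^i(j)/M_0(i,j))]$, finite under~H4.

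\emph{Main obstacle.} The principal difficulty, distinguishing this from the single-type treatments of Bansaye~\cite{ba} and Wang--Liu~\cite{wang}, is the bookkeeping forced by the combination of multi-type structure and randomness of the environment: the spine's type is a non-trivial Markov chain whose transitions mix the random matrix $M_n$ with the random Perron vectors $U_{n,\infty}$, and one must verify---via shift-stationarity of $\xi$ under $T$ together with H3's two-sided bounds on $U_{n,\infty}$---that the contributions from successive spine generations collapse into a single-generation moment, so that the summability furnished by $\kappa(-1)<1$ and~H4 closes the estimate. This collapse is essentially the multi-type random-environment analogue of \cite[Theorem~1.3]{Liu2}, and is where the bulk of the technical work lies.
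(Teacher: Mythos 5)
Your Step~1 (Doob's $L\log L$ maximal inequality, combined with conditional Jensen applied to $x\mapsto x\log^+ x$ and the identity $\tilde{W}_n^i=\mathbb{E}[\tilde{W}^i\mid\tilde{\mathcal{F}}_n]$ coming from the $L^1$-convergence in Proposition~\ref{pro1}(b)) is a valid reduction of $\mathbb{E}[\tilde{W}^{i*}]<\infty$ to $\mathbb{E}[\tilde{W}^i\log^+\tilde{W}^i]<\infty$. The paper, however, never passes through $L\log L$ moments of the limit at all: it applies a Burkholder--Davis--Gundy--type inequality for the concave function $\phi_{1/2}(x)=\phi(\sqrt{x})$, $\phi(x)=x\ell(x)$ (Lemma 2.1(b) of Liang--Liu~\cite{Liu2}), directly to the martingale increments $D_n=\tilde{W}_{n+1}^i-\tilde{W}_n^i$, giving $\mathbb{E}\phi(\tilde{W}^{i*}-1)\le C\sum_n\mathbb{E}\phi(|D_n|)$. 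It then splits on whether $(\lambda_{0,n-1}U_{0,\infty}(i))^{-1}\le b^n$ with $\kappa(-1)<b<1$: on that event H3 and H4 yield a single-generation bound $\mathbb{E}\overline{X}_{1,j}(1+\log^+\overline{X}_{1,j})<\infty$, and on the complement $\kappa(-1)<1$ forces the probability down geometrically via \eqref{2.2}. Both series converge, and $\phi(x)\ge x/2$ for $x\ge1$ converts this into $\mathbb{E}[\tilde{W}^{i*}]<\infty$.

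Step~2 of your proposal, by contrast, has a genuine gap. You assert that the contributions from successive spine generations ``collapse into a single-generation moment,'' but this is where the argument actually breaks. Writing $\tilde{W}^i=\sum_{n\ge0}Y_n$ along the spine with $Y_n$ carrying the factor $\lambda_{0,n-1}^{-1}$, the natural bound $\hat{\mathbb{E}}[\log^+\tilde{W}^i]\le\sum_n\hat{\mathbb{E}}[\log^+(\theta^{n}Y_n)]+C_\theta$ (for $\theta>1$) leads, after using stationarity and $\log\lambda_{0,n-1}\sim n\gamma$, to a term of order $\sum_n\hat{\mathbb{E}}[(\log^+ A_0-cn)^+]\asymp\hat{\mathbb{E}}[(\log^+ A_0)^2]$, where $A_0$ is the single-generation size-biased contribution. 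That is a \emph{stronger} requirement than Condition~\textbf{H4}, which only gives $\hat{\mathbb{E}}[\log^+ A_0]<\infty$. In other words, the standard Lyons--Pemantle--Peres spine argument yields the Kesten--Stigum dichotomy ($\mathbb{E}[\tilde{W}^i]=1$ vs.\ $\tilde{W}^i=0$), not the sharper $L\log L$ moment of the limit, and your sketch does not supply the finer device needed to close the gap. Relatedly, your use of $\kappa(-1)<1$ only to obtain $\mathbb{E}[\lambda_0^{-1}]<\infty$ misses its actual role: what matters is the geometric decay in $n$ of $\mathbb{E}[(\lambda_{0,n-1}U_{0,\infty}(i))^{-1}]$ given by \eqref{2.2}, which is what makes the paper's series over $n$ summable and is precisely what your logarithmic spine sum fails to exploit.
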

\begin{proof}

For $n \ge 1$, define the martingale difference
\begin{align}
D_n = D_n^i &:=\tilde{W}_{n+1}^i - \tilde{W}_n^i \nonumber \\
&= \sum_{r=1}^d \frac{U_{n+1,\infty}(r)}{\lambda_{0,n} U_{0,\infty}(i)} \sum_{j=1}^d \sum_{l=1}^{Z_n^i(j)} N_{l,n}^j(r) - \tilde{W}_n^i \nonumber\\
&= \sum_{j=1}^d \frac{U_{n,\infty}(j)}{\lambda_{0,n-1} U_{0,\infty}(i)} \sum_{l=1}^{Z_n^i(j)} \left( \frac{\langle N_{l,n}^j, U_{n+1,\infty} \rangle}{\lambda_n U_{n,\infty}(j)} - 1 \right) \nonumber\\
&= \sum_{j=1}^d \frac{U_{n,\infty}(j)}{\lambda_{0,n-1} U_{0,\infty}(i)} \sum_{l=1}^{Z_n^i(j)} (L_{l,n}^j - 1), \label{em1b}
\end{align}
where we set
\[
L_{l,n}^j := \frac{\langle N_{l,n}^j, U_{n+1,\infty} \rangle}{\lambda_n U_{n,\infty}(j)}.
\]
Note that $\mathbb{E}_{\xi} L_{l,n}^j = 1$ by \eqref{mn}. The supremum can then be written as
\[
\tilde{W}^{i\ast} = 1 + \sup_{n \ge 1}\, (D_1 + D_2 + \cdots + D_n).
\]

Define the auxiliary function
\[
\ell(x) = 
\begin{cases} 
1 - \dfrac{1}{2x}, & x > 1, \\[0.8em]
\dfrac{x}{2},      & x \le 1.
\end{cases}
\]
Then $\phi(x) := x \ell(x)$ is convex, and $\phi_{1/2}(x) := \phi(x^{1/2})$ is concave (hence subadditive). Applying \cite[Part (b), Lemma 2.1]{Liu2} yields
\begin{equation}\label{em3}
\mathbb{E}\, \phi\bigl(\tilde{W}^{i\ast}-1\bigr) \le C \sum_{n \ge 1} \mathbb{E}\, \phi(|D_n|).
\end{equation}

Let $\mathbb{E}_n[\cdot] = \mathbb{E}[\,\cdot \mid \tilde{\mathcal{F}}_n]$. From \eqref{em1b} and the elementary inequality $(\sum_{k=1}^d x_k)^2 \le C \sum_{k=1}^d x_k^2$, we have
\begin{align*}
\mathbb{E}_n \phi(|D_n|) &= \mathbb{E}_n \phi_{1/2}(D_n^2) \\
&\le \mathbb{E}_n \phi_{1/2}\!\left[ C \sum_{j=1}^d \Bigl( \sum_{l=1}^{Z_n^i(j)} \hat{L}_{l,n}^j \Bigr)^2 \right],
\end{align*}
where
\[
\hat{L}_{l,n}^j := \frac{U_{n,\infty}(j)}{\lambda_{0,n-1} U_{0,\infty}(i)} \, (L_{l,n}^j - 1), \qquad l \ge 1.
\]
Using the subadditivity of $\phi_{1/2}$ gives
\begin{equation}\label{em2}
\mathbb{E}_n \phi(|D_n|) \le C\, \mathbb{E}_n \sum_{j=1}^d \phi\!\left( \Bigl| \sum_{l=1}^{Z_n^i(j)} \hat{L}_{l,n}^j \Bigr| \right).
\end{equation}

Under $\mathbb{P}_\xi$, the variables $\hat{L}_{l,n}^j$ ($l \ge 1$) are independent with zero mean. Hence, conditioned on $\tilde{\mathcal{F}}_n$, the family $\{\hat{L}_{l,n}^j : 1 \le l \le Z_n^i(j)\}$ forms a sequence of martingale differences with respect to a suitable natural filtration. Applying \cite[Lemma 2.1, part b]{Liu2} to \eqref{em2} yields
\begin{align*}
\mathbb{E}_n \phi(|D_n|) &\le C\, \mathbb{E}_n \sum_{j=1}^d \sum_{l=1}^{Z_n^i(j)} \phi\bigl( |\hat{L}_{l,n}^j| \bigr).
\end{align*}
Recalling $\phi(x)=x\ell(x)$ and that $Z_n^i$ is $\tilde{\mathcal{F}}_n$-measurable, we obtain
\begin{equation}\label{eh1}
\mathbb{E}_n \phi(|D_n|) \le C\, \mathbb{E}_n \sum_{j=1}^d \frac{Z_n^i(j)U_{n,\infty}(j)}{\lambda_{0,n-1} U_{0,\infty}(i)} \, \bigl|L_{l,n}^j - 1\bigr| \, \ell\bigl(|\hat{L}_{l,n}^j|\bigr).
\end{equation}

Because $\mathbb{E}_{\xi} \sum_{j=1}^d \frac{Z_n^i(j)U_{n,\infty}(j)}{\lambda_{0,n-1} U_{0,\infty}(i)} = 1$, we have the almost-sure bound
\[
\frac{Z_n^i(j)U_{n,\infty}(j)}{\lambda_{0,n-1} U_{0,\infty}(i)} \le 1 \quad \mathbb{P}_\xi\text{-a.s.}
\]
Taking expectations in \eqref{eh1} and combining with \eqref{em3} leads to
\begin{equation}\label{key-ineq}
\mathbb{E}\, \phi\bigl(\tilde{W}^{i\ast}-1\bigr) \le C \sum_{n=1}^{\infty} \mathbb{E} \sum_{j=1}^d \bigl|L_{l,n}^j - 1\bigr| \, \ell\bigl(|\hat{L}_{l,n}^j|\bigr).
\end{equation}
Fix $b \in (\kappa(-1), 1)$. We split the expectation in \eqref{key-ineq} according to the two regions $\{(\lambda_{0,n-1} U_{0,\infty}(i))^{-1} \le b^n\}$ and $\{(\lambda_{0,n-1} U_{0,\infty}(i))^{-1} \ge b^n\}$:
\begin{align*}
\mathbb{E}\, \phi\bigl(\tilde{W}^{i\ast}-1\bigr) &= \mathbb{E}\, \phi\bigl(\tilde{W}^{i\ast}-1\bigr) \mathbf{1}_{\{(\lambda_{0,n-1} U_{0,\infty}(i))^{-1} \le b^n\}} \;+\; \mathbb{E}\, \phi\bigl(\tilde{W}^{i\ast}-1\bigr) \mathbf{1}_{\{(\lambda_{0,n-1} U_{0,\infty}(i))^{-1} \ge b^n\}} \\
&=: I_1(n) + I_2(n).
\end{align*}

\noindent \textbf{Estimate of $I_1(n)$.} Since $\ell$ is increasing on $[0,\infty)$ and $|\hat{L}_{l,n}^j| \le |L_{l,n}^j-1| (\lambda_{0,n-1} U_{0,\infty}(i))^{-1}$ , we have
\[
I_1(n) \le C \sum_{n=1}^{\infty} \mathbb{E} \sum_{j=1}^d \bigl|L_{l,n}^j - 1\bigr| \, \ell\bigl( |L_{l,n}^j - 1| \, b^n \bigr).
\]
To bound $|L_{l,n}^j-1|$, observe first that
\[
L_{l,n}^j = \frac{\langle N_{l,n}^j, U_{n+1,\infty} \rangle}{\langle M_n[j], U_{n+1,\infty} \rangle},
\]
where $M_n[j]$ denotes the $j$-th row of $M_n$. Because $\|U_{n+1,\infty}\|=1$, we obtain, almost surely,
\begin{align*}
L_{l,n}^j &\le \frac{\max_{1 \le k \le d} N_{l,n}^j(k)}{\min_{1 \le k \le d} M_n(j,k)} \le \frac{D\, \|N_{l,n}^j\|}{\displaystyle\max_{1 \le i,j \le d} M_n(i,j)},
\end{align*}
where the last inequality follows by Condition \textbf{H3}. Consequently,
\begin{align*}
|L_{l,n}^j-1| &\le \Bigl| \frac{D\, \|N_{l,n}^j\|}{\max_{i,j} M_n(i,j)} - 1 \Bigr| =: \overline{X}_{n,j}.
\end{align*}
Since the environment sequence is i.i.d., $(\overline{X}_{n,j})_{n\ge1}$ are i.i.d. for each fixed $j$; we denote a generic copy by $\overline{X}_{1,j}$. Using that $\ell(t) < 1$ for $t \ge 1$ and $\ell$ is increasing, we deduce
\begin{align*}
I_1(n) &\le C \sum_{n=1}^{\infty} \mathbb{E} \sum_{j=1}^d \overline{X}_{1,j} \, \ell\bigl( \overline{X}_{1,j} b^n \bigr) \\
&\le C \sum_{j=1}^d \sum_{n=1}^{\infty} \mathbb{E}\, \overline{X}_{1,j} \int_{b^n \overline{X}_{1,j}}^{b^{n-1} \overline{X}_{1,j}} \frac{\ell(t)}{t} \,  \mathrm{d}t \\
&= C \sum_{j=1}^d \mathbb{E}\, \overline{X}_j \left( \int_0^1 \frac{\ell(t)}{t} \,  \mathrm{d}t + \int_1^{\overline{X}_{1,j}} \frac{\ell(t)}{t} \, \mathrm{d}t \right) \\
&\le C \sum_{j=1}^d \mathbb{E}\, \overline{X}_{1,j} \bigl( 1 + \log^+ \overline{X}_{1,j} \bigr).
\end{align*}
Condition \textbf{H4} guarantees $\mathbb{E}\, L_{l,n}^j \log^+ L_{l,n}^j < \infty$, which is equivalent to $\mathbb{E}\, \overline{X}_{1,j} \log^+ \overline{X}_{1,j} < \infty$. Hence $I_1(n) < \infty$.

\noindent \textbf{Estimate of $I_2(n)$.} Using again $\ell(t) < 1$ for $t \ge 1$,
\begin{align*}
I_2(n) &\le C \sum_{n=1}^{\infty} \mathbb{E} \sum_{j=1}^d \bigl|L_{l,n}^j - 1\bigr| \, \mathbf{1}_{\{(\lambda_{0,n-1} U_{0,\infty}(i))^{-1} \ge b^n\}} \\
&\le C \sum_{n=1}^{\infty} \sum_{j=1}^d \mathbb{E}\, \overline{X}_{1,j} \; \mathbb{P}\bigl((\lambda_{0,n-1} U_{0,\infty}(i))^{-1} \ge b^n \bigr) \\
&\le C \sum_{j=1}^d \mathbb{E}\, \overline{X}_{1,j} \sum_{n=1}^{\infty} b^{-n} \mathbb{E}\, (\lambda_{0,n-1} U_{0,\infty}(i))^{-1} \\
&\le C \sum_{j=1}^d \mathbb{E}\, \overline{X}_{1,j} \sum_{n=1}^{\infty} \bigl[ b^{-1} \kappa(-1) \bigr]^n,
\end{align*}
where the last inequality follows by \eqref{2.2}. Because $b^{-1}\kappa(-1) < 1$, we conclude that $I_2(n)<\infty$.

Both $I_1(n)$ and $I_2(n)$ are finite, therefore $\mathbb{E}\, \phi(\tilde{W}^{i\ast}-1) < \infty$. This implies $\mathbb{E}\, \tilde{W}^{i\ast} < \infty$, completing the proof.
\end{proof}

\subsection{Decomposition for the martingale}

In this section, we generalizes the approach of \cite{wang} to the multi-type case, aiming to derive a decomposition of \((W_n^i)_{n\ge0}\) through the construction of a random tree. 

A multi-type branching process can be identified naturally as a random colored-tree (where type $i$ is regarded as color $i$), which is a subset of $$\tilde{U}:=\{1,\cdots,d\}\times\cup_{n=0}^{\infty}\mbb{N}_+^n$$ with the convention $\mbb{N}_+^0=\{\emptyset\}$. The initial particle with type $i$ is denoted by $(i,\emptyset)$. 

As usual, for two sequences $u,v\in\cup_{n=0}^{\infty}\mbb{N}_+^n$, denote by $|u|$ the length of $u$ and denote by $uv=(u,v)$ the sequence obtained by the juxtaposition of $u$ and $v$. By convention, $|\emptyset|=0$ and $\emptyset u=u$. A particle of type $i$ of generation $n$ is denoted by $(i,u)$ with $u\in\mbb{N}_+^n$ a sequence of length $n$; its $k$-th child of type $j$ is denoted $(j, uk)$, which is linked with its ancestor $(i, u)$. By abuse of notation, for an element $t=(i,u)\in\tilde{U}$ we write $r(t)=i$ for its type and $|t|:=|u|$ for its length.  
For $t_{1}=(r(t_{1}),u_{1}),\,t_{2}=(r(t_{2}),u_{2})\in\tilde{U}$ define  
\[
t_{1}t_{2}:=(r(t_{2}),\,u_{1}u_{2}).
\] 
For example, for $t_1=(2,13)$ and $t_2=(3,212)$, we have $r(t_1)=2, r(t_2)=3$, $|t_1|=2$, $|t_2|=3$ and $t_1t_2=(3,13212)$.   

For the Galton--Watson tree including immigrants with the initial particle $(i,\emptyset)$, we add a sequence of eternal particle \[
\{(i,0_{k}):k\ge 0\},
\qquad 0_{n}:=0_{n-1}0\ \ (\text{with }|0_{k}|=k).
\]
The eternal particle $(i,0_{n})$ at time $n$ produces $Y_{n}+e_{i}$ offspring:
 $$(i,0_{n+1}), (1,0_n1),\cdots,(1,0_nY_n(1)),(2,0_n1),\cdots,(2,0_nY_n(2)),\cdots,(d,0_nY_n(d)).$$  Consequently, the set $T^{i}$ of all particles of a multi-type branching process with immigration in a random environment, started by a single type-$i$ particle, can be decomposed into two disjoint trees (see Figure 1):
\begin{itemize}
    \item $\tilde{T}^{i}$ -- begins with the initial particle $(i,\emptyset)$ and contains all its descendants;
    \item $\hat{T}^{i}$ -- begins with the eternal particle $(i,0_{0})$ and contains all its descendants (including all immigrant particles).
\end{itemize}
  
   \begin{figure}[h]
\centering
\begin{tikzpicture}[
    level distance=1.2cm,
    sibling distance=0.5cm,
    every node/.style={ inner sep=1pt},
    edge from parent/.style={draw, -},
    label distance=-5pt
]

% 顶部根节点T
\node (T) at (0,4.2) [draw=none] {$T^i$};

% 左侧子树（0_0分支）
\node (00) at (-4, 2.8) {$(i,0_0)$};
\node[draw=none] (k1) at (-4, 3.5) {$\hat{T}^i$}; % 标签

% 0_0的子节点
\coordinate (00_child) at (-4, 1.5);
\node (01) at (-7, 1.5) {$(i,0_1)$} edge (00);
\node (001) at (-5.5, 1.5) {$(1,0_{0}1)$} edge (00);
\node (00Y0) at (-3, 1.5) {$(1,0_0Y_0(1))$} edge (00);
\node (00Y01) at (0, 1.5) {$(d,0_0Y_0(d))$} edge (00);
\node at (-4.4,1.5) {$\cdots$}; % 省略号
\node at (-1.5,1.5) {$\cdots$};
% 0_1的子节点
\node (02) at (-7, 0.2) {$(i,0_2)$} edge (01);
\node (01Y1) at (-5.05, 0.2) {$(d,0_1Y_1(d))$} edge (01);
\node (01Y2) at (-2.45, 0.2) {$(1,0_0Y_0(d)1)$} edge (00Y01);
\node (01Y2) at (0.55, 0.2) {$(d,0_0Y_0(d)Y_1(d))$} edge (00Y01);
\node at (-3,1.2) (vertical_dots) {$\vdots$}; 
\node at (-5.5,1.2) (vertical_dots) {$\vdots$}; 
\node at (-6.2,0.2) {$\cdots$}; % 省略号
\node at (-3.8,0.2) {$\cdots$};
\node at (-1.05,0.2) {$\cdots$};
\node at (-7,-0.1) (vertical_dots) {$\vdots$}; 
\node at (-5.05,-0.1) (vertical_dots) {$\vdots$}; 
\node at (-2.45,-0.1) (vertical_dots) {$\vdots$}; 
\node at (0.55,-0.1) (vertical_dots) {$\vdots$}; 

% 右侧子树（Φ分支）
\node (Phi) at (4, 2.8) {$(i,\emptyset)$} ;
\node[draw=none] (k2) at (4, 3.5) {$\tilde{T}^i$}; % 标签
\draw [ decorate, decoration={brace, amplitude=5pt}]
      (k1.north east) -- (k2.north west)
      node [midway, below=6pt, font=\footnotesize] {};

\node (1) at (1.5, 1.5) {$(1,1)$} edge (Phi);
\node (2) at (3.45, 1.5) {$(1,X_{(i,\emptyset)}(1))$} edge (Phi);
\node (XPhi) at (6.5, 1.5) {$(d,X_{(i,\emptyset)}(d))$} edge (Phi);

\node at (2.2,1.5) {$\cdots$};
\node at (5,1.5) {$\cdots$};
\node at (1.5, 1.2) (vertical_dots) {$\vdots$}; 
\node at (3.45, 1.2) (vertical_dots) {$\vdots$}; 

\node (1) at (3.45, 0.2) {$\cdots$} edge (XPhi);
\node (2) at (6.5, 0.2) {$(d,X_{(i,\emptyset)}(1)X_{(d,X_{(i,\emptyset)}(d))}(d))$} edge (XPhi);
\node at (3.45, -0.1) (vertical_dots) {$\vdots$}; 
\node at (6.5, -0.1) (vertical_dots) {$\vdots$}; 

\node[draw=none] at (0, -1.5) {\textbf{Figure 1:} \quad Genealogical tree of a multi-type BPRE with immigration};

\end{tikzpicture}
\end{figure}

Let us now define more formally the family tree \(T^{i}\) (the set of all individuals) together with its two disjoint subtrees \(\tilde{T}^{i}\) and \(\hat{T}^{i}\) for each \(1\le i \le d\).

First, define the sets
\[
\hat{U}:=\{1,\dots ,d\}\times\{0_{k}u:k\ge0,\;u\in\bigcup_{n=1}^{\infty}\mathbb{N}^{n}\}
\quad\text{and}\quad
E_{i}:=\{i\}\times\{0_{k}:k\ge0\}.
\]
Clearly, \(E_{i}\subset\hat{U}\) is the set of eternal particles of type \(i\).

We introduce a family of random variables \(\{X_{a}:a\in\tilde{U}\cup\hat{U}\}\) such that, under \(\mathbb{P}_{\xi}\), they are mutually independent. Their distributions are constructed as follows:
\begin{itemize}
    \item For an eternal particle \(t=(i,0_{n})\in E_{i}\), let \(X_{t}=Y_{n}\) with distribution \(\hat{p}(\xi_{n})\) under \(\mathbb{P}_{\xi}\).
    \item For any other particle \(t\in\tilde{U}\cup\hat{U}\setminus E_{i}\), let \(X_{t}\) have distribution \(p^{\,r(t)}(\xi_{|t|})\).
\end{itemize}

Now we define the two trees recursively by generation.
For \(n\ge0\), let \(\tilde{T}^{i}_{n}\) be the set of individuals at time \(n\) descending from the initial particle \((i,\emptyset)\):
\[
\tilde{T}^{i}_{0}=\{(i,\emptyset)\},
\]
and for \(n\ge0\),
\[
\tilde{T}^{i}_{n+1}= \bigcup_{k=1}^{d}\bigl\{(k,uj): t=(\cdot,u)\in\tilde{T}^{i}_{n},\; 1\le j\le X_{t}(k)\bigr\}.
\]
For \(n\ge0\), let \(\hat{T}^{i}_{n}\) be the set of individuals at time \(n\) descending from the eternal particle \((i,0_{0})\):
\[
\hat{T}^{i}_{0}=\{(i,0_{0})\},
\]
and for \(n\ge0\),
\[
\hat{T}^{i}_{n+1}= \{(i,0_{n+1})\}\cup\bigcup_{k=1}^{d}\bigl\{(k,uj): t=(\cdot,u)\in\hat{T}^{i}_{n},\; 1\le j\le X_{t}(k)\bigr\}.
\]

Finally, set
\[
\tilde{T}^{i}= \bigcup_{n\ge 0}\tilde{T}^{i}_{n},
\qquad
\hat{T}^{i}= \bigcup_{n\ge 0}\hat{T}^{i}_{n}.
\]
Then \(\tilde{T}^{i}\) is the family tree of a multi-type branching process in the random environment \(\xi\), started by a single type-\(i\) particle and governed by the offspring laws \((p^{r}(\xi_{n}))_{1\le r\le d,\;n\ge0}\). The tree \(\hat{T}^{i}\) contains precisely all eternal particles, all immigrants, and all descendants of immigrants.

\noindent\textbf{Shifted trees.}
For a particle \(a\in\tilde{U}\cup\hat{U}\setminus E_{i}\) (a non-eternal particle), we define the shifted tree rooted at \(a\) as \(\tilde{T}[a]=\bigcup_{n\ge0}\tilde{T}_{n}[a]\), where
\[
\tilde{T}_{0}[a] = (r(a),\emptyset),\qquad
\tilde{T}_{n+1}[a] = \bigcup_{k=1}^{d}\bigl\{(k,uj): t=(\cdot,u)\in\tilde{T}_{n}[a],\; 1\le j\le X_{at}(k)\bigr\}.
\]

For an eternal particle \(a\in E_{i}\), the shifted tree of \(\hat{T}\) rooted at \(a\) is \(\hat{T}[a]=\bigcup_{n\ge0}\hat{T}_{n}[a]\), where
\[
\hat{T}_{0}[a] = (r(a),0_{0}),\qquad
\hat{T}_{n+1}[a] = \{(r(a),0_{n+1})\}\cup\bigcup_{k=1}^{d}\bigl\{(k,uj): t=(\cdot,u)\in\hat{T}_{n}[a],\; 1\le j\le X_{at}(k)\bigr\}.
\]

Note that the root of the shifted tree \(\tilde{T}[a]\) (respectively \(\hat{T}[a]\)) is \((r(a),\emptyset)\) (respectively \((r(a),0_{0})\)), which is distinct from the particle \(a\) itself. A shifted tree is therefore different from a subtree. Indeed, the subtree of \(\tilde{T}^{i}\) starting at a vertex \(a=(r(a),t)\in\tilde{T}^{i}\) is defined as
$a\tilde{T}^{i}:=\bigcup_{k=1}^{d}\{(k,tu):(k,u)\in\tilde{T}[a]\}.$

For a non-eternal particle \(a\in\tilde{U}\cup\hat{U}\setminus E_{i}\), define the \(n\)-th generation population vector of the shifted tree as
\[
\tilde{Z}_{n}[a]=(\tilde{Z}_{n}[a](k))_{1\le k\le d},\qquad
\tilde{Z}_{n}[a](k):=\#\{t\in\tilde{T}_{n}[a]: r(t)=k\}.
\]
For an eternal particle \(a\in E_{i}\), define
\[
\hat{Z}_{n}[a]=(\hat{Z}_{n}[a](k))_{1\le k\le d},\qquad
\hat{Z}_{n}[a](k):=\#\{t\in\hat{T}_{n}[a]: r(t)=k\}.
\]
Clearly, $\hat{Z}_{n}[a]$ conditioned on the environment $\xi$ has the same distribution as $\hat{Z}_{n}[r(a),\emptyset]$ conditioned on the environment $T^{|a|}\xi$. Therefore, the multi-type branching process with immigration in a random environment $\{Z^i_n;n\ge0\}$ admits the following decomposition:
\begin{equation}\label{de1}
    Z^i_n=\tilde{Z}_n[(i,\emptyset)]+\hat{Z}_n[(i,0_0)]-e_i.
\end{equation}

For $a\in\tilde{U}\cup\hat{U}\setminus E_i$, define $\tilde{W}_0[a]=1$ and
\[
\tilde{W}_n[a]=\frac{\langle \tilde{Z}_n[a], U_{n+|a|,\infty} \rangle}{\lambda_{|a|,|a|+n-1}U_{|a|,\infty}(r(a))}, \quad n \geq 1.
\]
For $a\in E_i$, define $\hat{W}_0[a]=1$ and 
\[
\hat{W}_n[a]=\frac{\langle \hat{Z}_n[a], U_{n+|a|,\infty} \rangle}{\lambda_{|a|,|a|+n-1}U_{|a|,\infty}(i)}, \quad n \geq 1.
\]
From \cite{grama1}, when $a=(i,\emptyset)$, $\{\tilde{W}_n[a];n\ge0\}$ is a nonnegative martingale, both under $\mbb{P}_{\xi}$ and $\mbb{P}$ with respect to the filtration $\tilde{\mathcal{F}}_n$. It is easily to see that, for $a\in\tilde{U}\cup\hat{U}\setminus E_i$, the law under $P_{\xi}$ of the process $\{\tilde{W}_n[a];n\ge0\}$ is the same as that under $\mbb{P}_{T^{|a|}\xi}$ of the process $\{\tilde{W}_n[(r(a),\emptyset)];n\ge0\}$.
Since the environment sequence is i.i.d., the process $\{\tilde{W}_n[a];n\ge1\}$ is identical distributed as $\{\tilde{W}_n[(r(a),\emptyset)];n\ge0\}$ under $\mbb{P}$. Therefore, from Proposition \ref{pro1}, the limit $\tilde{W}[a]:=\lim_{n\to\infty}\tilde{W}_n[a]$ exists almost surely.

The existence of $W^i:=\lim_{n\to\infty}W_n^i~\mbb{P}$-a.s. is guaranteed by part (b) of Theorem \ref{th1}. The proof of part (b) will be presented in the next section and does not rely on the material of this section. For the sake of reading continuity, the decomposition formula for $W^i$ is stated below.
\begin{proposition}\label{pro4}
    Assume Conditions \textbf{H1}, \textbf{H3} and $\gamma > 0$. If $\kappa(-1)<1$ and $\mbb{E}\log^+\frac{|Y_0|}{\lambda_0}<\infty$, then we have
   \begin{equation}\label{2.8}
     W^i=\tilde{W}[(i,\emptyset)]+\sum_{j=1}^{\infty}\sum_{k=1}^{d}\sum_{l=1}^{Y_{j-1}(k)}\tilde{W}[(k,0_{j-1}l)] \frac{U_{j,\infty}(k)}{\lambda_{0,j-1}U_{0,\infty}(k)}.
   \end{equation}

\end{proposition}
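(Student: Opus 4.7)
The plan is to derive the decomposition by first establishing its finite-$n$ counterpart from the tree construction of Section 4.2, and then passing to the limit via a Fatou/dominated-convergence argument that exploits the $\mathbb{P}_\xi$-martingale property $\mathbb{E}_\xi \tilde W_m[a]=1$. Using the splitting of the family tree $T^i$ into $\tilde T^i$ and the immigrant portion of $\hat T^i$, the population at generation $n$ decomposes as
\[
X_n^i \;=\; \tilde Z_n[(i,\emptyset)] \;+\; \sum_{j=1}^{n}\sum_{k=1}^{d}\sum_{l=1}^{Y_{j-1}(k)}\tilde Z_{n-j}[(k,0_{j-1}l)],
\]
since the $l$-th type-$k$ immigrant produced by the eternal particle $(i,0_{j-1})$ is born at time $j$ and its descendants at time $n$ are counted by $\tilde Z_{n-j}[(k,0_{j-1}l)]$. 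Pairing with $U_{n,\infty}$, dividing by $\lambda_{0,n-1}U_{0,\infty}(i)$, and using $\lambda_{0,n-1}=\lambda_{0,j-1}\lambda_{j,n-1}$ together with the definition of $\tilde W_{n-j}[a]$ at $|a|=j$, I obtain
\[
W_n^i \;=\; \tilde W_n[(i,\emptyset)] + R_n,\qquad R_n := \sum_{j=1}^{n}\sum_{k=1}^{d}\sum_{l=1}^{Y_{j-1}(k)}\tilde W_{n-j}[(k,0_{j-1}l)]\,\frac{U_{j,\infty}(k)}{\lambda_{0,j-1}U_{0,\infty}(i)}.
\]

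Let $R_n^J$ be the truncation of $R_n$ to indices $j\le J$, and let $R$, $R^J$ denote the corresponding quantities with $\tilde W[a]$ in place of $\tilde W_{n-j}[a]$. By Theorem \ref{th1}(b) and Proposition \ref{pro1}(a), $R_n\to R_\infty := W^i - \tilde W[(i,\emptyset)]$ almost surely, and for fixed $J$, $R_n^J \to R^J$ a.s.\ as a finite sum of a.s.-convergent terms. Since every summand is non-negative, $R_n \ge R_n^J$ for $n \ge J$, so $R_\infty \ge R^J$; letting $J \to \infty$ yields $R_\infty \ge R$ by monotone convergence.

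For the reverse inequality, conditionally on $(\xi,Y)$ the shifted martingales $\tilde W_{n-j}[(k,0_{j-1}l)]$ have $\mathbb{P}_\xi$-mean one and are independent of $Y$, so
\[
\mathbb{E}_{\xi,Y}\bigl(R_n - R_n^J\bigr) \;=\; \sum_{j=J+1}^{n}\frac{\langle Y_{j-1},U_{j,\infty}\rangle}{\lambda_{0,j-1}U_{0,\infty}(i)}\;\le\;\tau_J \;:=\;\sum_{j=J+1}^{\infty}\frac{\langle Y_{j-1},U_{j,\infty}\rangle}{\lambda_{0,j-1}U_{0,\infty}(i)}.
\]
The hypothesis $\mathbb{E}\log^{+}(|Y_0|/\lambda_0)<\infty$ combined with Theorem \ref{th1}(b) yields $\tau_0<\infty$ $\mathbb{P}$-a.s., hence $\tau_J\downarrow 0$. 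Fatou's lemma applied to $R_n-R_n^J\ge 0$ gives $\mathbb{E}_{\xi,Y}(R_\infty-R^J)\le \tau_J$, while the same Fatou applied to $R_n$ itself gives $\mathbb{E}_{\xi,Y}R_\infty\le\tau_0<\infty$ a.s. Letting $J\to\infty$ and invoking dominated convergence (with $0\le R_\infty-R^J \downarrow R_\infty-R$ dominated by the integrable $R_\infty$) produces $\mathbb{E}_{\xi,Y}(R_\infty - R)\le 0$, which together with $R_\infty \ge R$ forces $R_\infty = R$ $\mathbb{P}$-a.s. The main obstacle is the passage from the pre-limit sum to the infinite series: the summands $\tilde W_{n-j}[a]$ are non-monotone in $n$ and may have $\mathbb{P}_\xi$-limit strictly less than one when Condition \textbf{H4} fails, so a straightforward comparison $\mathbb{E}_{\xi,Y}R\le \mathbb{E}_{\xi,Y}R_\infty$ need not close the loop. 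The key observation circumventing this is that the pre-limit tail $\mathbb{E}_{\xi,Y}(R_n-R_n^J)$ has an exact closed form via the mean-one martingale identity, so the tail $\tau_J$ controls the Fatou bound uniformly in $n$ without appealing to Proposition \ref{pro3}.
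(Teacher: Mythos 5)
Your proof is correct, but it takes a genuinely different and in fact more economical route than the paper's. The paper proves the identity by showing that the ``maximal'' series $\sum_{j,k,l}\tilde W^{\ast}[(k,0_{j-1}l)]\,U_{j,\infty}(k)/(\lambda_{0,j-1}U_{0,\infty}(i))$ is finite $\mathbb{P}$-a.s.\ and then applying dominated convergence; the key integrability input is Proposition~\ref{pro3}, the bound $\mathbb{E}\tilde W^{i\ast}<\infty$, which itself requires both $\kappa(-1)<1$ and Condition \textbf{H4}. (Indeed the paper's statement of Proposition~\ref{pro4} silently leaves out \textbf{H4}, though its proof invokes it, which is a small gap there.) You avoid the maximal function entirely. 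Instead you split $R_n$ into a head $R_n^J$ and a tail, note that each summand converges a.s., obtain the lower bound $R_\infty\ge R$ for free from non-negativity, and then force the reverse inequality by computing $\mathbb{E}_{\xi,Y}(R_n-R_n^J)$ exactly from the quenched mean-one martingale identity $\mathbb{E}_{\xi}\tilde W_m[a]=1$ and passing to $J\to\infty$ via Fatou and the a.s.\ convergence $\tau_J\downarrow 0$, which follows from Theorem~\ref{th1}(b) under the sole hypothesis $\mathbb{E}\log^{+}(|Y_0|/\lambda_0)<\infty$. What your version buys: it needs neither $\kappa(-1)<1$ nor Condition \textbf{H4} nor Proposition~\ref{pro3}, so the decomposition \eqref{2.8} holds under strictly weaker hypotheses than those stated, and the elementary bookkeeping you flag --- that Fatou alone gives only one inequality, and that one must control the tail uniformly in $n$ via the exact closed form for $\mathbb{E}_{\xi,Y}(R_n-R_n^J)$ rather than rely on monotonicity of $\tilde W_{n-j}[a]$ --- is precisely the right observation to close the argument. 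One cosmetic remark: the displayed statement \eqref{2.8} has $U_{0,\infty}(k)$ in the denominator, which should read $U_{0,\infty}(i)$ as in \eqref{decompose} and as in your proof.
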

\begin{proof}
Since the offspring of the initial particle $(i,0_0)$ consist of $(i,0_1)$ and the immigrants, the definition of the random tree $(\hat{T}_n[a])_{n\ge0}$ for $a \in E_i$ yields
\[
\hat{Z}_n[(i,0_0)] = \hat{Z}_{n-1}[(i,0_1)] + \sum_{k=1}^{d}\sum_{l=1}^{Y_0(k)} \tilde{Z}_{n-1}[(k,0_0l)].
\]
Using the definitions of $\hat{W}_n[a]$ and $\tilde{W}_n[a]$, we obtain
\begin{align*}
    \hat{W}_n[(i,0_0)] = \frac{\hat{W}_{n-1}[(i,0_{1})] U_{1,\infty}(i)}{\lambda_0 U_{0,\infty}(i)} + \sum_{k=1}^{d}\sum_{l=1}^{Y_{0}(k)} \tilde{W}_{n-1}[(k,0_{0}l)] \frac{U_{1,\infty}(k)}{\lambda_{0} U_{0,\infty}(i)}.
\end{align*}
Iterating this relation gives
\[
\hat{W}_n[(i,0_0)] = \frac{U_{n,\infty}(i)}{\lambda_{0,n-1} U_{0,\infty}(i)} + \sum_{j=1}^n \sum_{k=1}^{d} \sum_{l=1}^{Y_{j-1}(k)} \tilde{W}_{n-j}[(k,0_{j-1}l)] \frac{U_{j,\infty}(k)}{\lambda_{0,j-1} U_{0,\infty}(i)}.
\]
Combining this with \eqref{de1}, we arrive at the decomposition
\begin{align}
  W_n^i &= \frac{\langle \tilde{Z}_n[(i,\emptyset)] + \hat{Z}_n[(i,0_0)] - e_i, U_{n,\infty} \rangle}{\lambda_{0,n-1} U_{0,\infty}(i)} \nonumber\\
  &= \tilde{W}_n[(i,\emptyset)] + \sum_{j=1}^n \sum_{k=1}^{d} \sum_{l=1}^{Y_{j-1}(k)} \tilde{W}_{n-j}[(k,0_{j-1}l)] \frac{U_{j,\infty}(k)}{\lambda_{0,j-1} U_{0,\infty}(i)}.\label{decompose}
\end{align}

Define $\tilde{W}^{\ast}[a] = \sup_{n \ge 0} \tilde{W}_n[a]$ for $a \in \tilde{U} \cup \hat{U} \setminus E_{i}$. If
\begin{equation}\label{em1}
\sum_{j=1}^{\infty} \sum_{k=1}^{d} \sum_{l=1}^{Y_{j-1}(k)} \tilde{W}^{\ast}[(k,0_{j-1}l)] \frac{U_{j,\infty}(k)}{\lambda_{0,j-1} U_{0,\infty}(i)} < \infty \quad \mathbb{P}\text{-a.s.},
\end{equation}
then the dominated convergence theorem applied to \eqref{decompose} yields
\[
W^i = \tilde{W}[(i,\emptyset)] + \sum_{j=1}^{\infty} \sum_{k=1}^{d} \sum_{l=1}^{Y_{j-1}(k)} \tilde{W}[(k,0_{j-1}l)] \frac{U_{j,\infty}(k)}{\lambda_{0,j-1} U_{0,\infty}(i)}.
\]

To verify \eqref{em1}, we consider the $\mathbb{P}_{\xi,Y}$-expectation of the summand. Condition \eqref{em1} is implied by the almost sure finiteness of
\[
\sum_{j=1}^{\infty} \lambda_{0,j-2}^{-1} \left( \sum_{k=1}^d \frac{Y_{j-1}(k) U_{j,\infty}(k)}{\lambda_{j-1} U_{0,\infty}(i)} \mathbb{E}_{\xi} \tilde{W}^{\ast}[(k,0_{j-1}1)] \right).
\]
By \cite[Lemma 3.1]{wang}, the series above converges almost surely if
\begin{align*}
& \mathbb{E} \log^+ \!\left( \sum_{k=1}^d \frac{Y_{j-1}(k) U_{j,\infty}(k)}{\lambda_{j-1} U_{0,\infty}(i)} \mathbb{E}_{\xi} \tilde{W}^{\ast}[(k,0_{j-1}1)] \right) \\
< \; & \mathbb{E} \log^+ \!\left[ \frac{|Y_{j-1}|}{\lambda_{j-1} U_{0,\infty}(i)} \left( \sum_{k=1}^d \mathbb{E}_{\xi} \tilde{W}^{\ast}[(k,0_{j-1}1)] \right) \right] < \infty.
\end{align*}
The hypothesis $\mathbb{E}\log^+(|Y_0|/\lambda_0) < \infty$ already handles the first factor. Thus, by $\log^+x\le x$ for $x>0$, a sufficient condition is
\[
\mathbb{E} \log^+ \!\left( \sum_{k=1}^d \mathbb{E}_{\xi} \tilde{W}^{\ast}[(k,0_{j-1}1)] \right) \le \sum_{k=1}^{d} \mathbb{E} \tilde{W}^{\ast}[(k,0_{j-1}1)] < \infty.
\]
Finally, Proposition \ref{pro3} guarantees that under the assumptions $\kappa(-1)<1$ and Condition \textbf{H5}, the bound above holds. This establishes \eqref{em1} and, together with \eqref{decompose}, completes the proof.
\end{proof}
\section{Proofs}
\subsection{Proof of Theorem \ref{th1}}
 
\textbf{Proof of $(\mathbf{a})$.} By the decomposition \eqref{e1.1} and induction, $Z_n$ and hence $W_n$ are $\mathcal{F}_n$-measurable for every $n\ge0$. Conditionally on the environment $\xi$, the families  
\[
\{N_{l,k}^r: l\ge1,\;0 \le k \le n-1,\; 1 \le r \le d\},\quad 
\{N_{l,n}^r: l\ge1,\; 1 \le r \le d\},
\]  
and vectors $(Y_n)$ are independent. Consequently, for any $A\in\mathcal{F}_n$,
\[
\mathbb{E}_{\xi,Y}\bigl[N_{l,n}^r(j)\mathbf{1}_A\bigr]
   =\mathbb{E}_{\xi,Y}\bigl[M_n(r,j)\mathbf{1}_A\bigr].
\]

Using \eqref{e1.1} and \eqref{def}, we therefore obtain  
\begin{align}
\mathbb{E}_{\xi,Y}\bigl(W^i_{n+1}\mid\mathcal{F}_n\bigr)
&= \mathbb{E}_{\xi,Y}\!\left(
      \frac{\sum_{r=1}^d\sum_{l=1}^{X_n(r)}\langle N_{l,n}^r,U_{n+1,\infty}\rangle
           +\langle Y_n,U_{n+1,\infty}\rangle}
           {\lambda_{0,n}U_{0,\infty}(i)}
      \;\bigg|\;\mathcal{F}_n\right)\nonumber\\[4pt]
&= \frac{\langle X_n,M_nU_{n+1,\infty}\rangle}
        {\lambda_{0,n}U_{0,\infty}(i)}
   +\frac{\langle Y_n,U_{n+1,\infty}\rangle}
        {\lambda_{0,n}U_{0,\infty}(i)}\nonumber\\[4pt]
&= W_n^i+\frac{\langle Y_n,U_{n+1,\infty}\rangle}
               {\lambda_{0,n}U_{0,\infty}(i)}.\label{3.11}
\end{align}
 Moreover,  
$\mathbb{E}_{\xi,Y}(W^i_{n+1}\mid\mathcal{F}_n)\ge W_n^i$ shows that $(W_n^i,\mathcal{F}_n)$ is a submartingale under $\mathbb{P}_{\xi,Y}$ for each $i$. Equation \eqref{th1e1} now follows by induction:
\begin{equation}\label{ew1}
\mathbb{E}_{\xi,Y}(W_n^i)
 =1+\sum_{k=0}^{n-1}
    \frac{\langle Y_k,U_{k+1,\infty}\rangle}
         {\lambda_{0,k}U_{0,\infty}(i)} .
\end{equation}

\noindent\textbf{Proof of $(\mathbf{b})$.} Because the environment sequence is i.i.d., $(Y_n)_{n\ge0}$ is i.i.d., while $(\lambda_n)_{n\ge0}$ and $(U_{n,\infty})_{n\ge0}$ are stationary and ergodic, respectively. Since $U_{k+1,\infty}>0$ and $\|U_{k+1,\infty}\|=1$ almost surely, $\mathbb{E}_{\xi,Y}(W_n^i)$ is bounded almost surely if and only if
\begin{equation}\label{3.17}
1+\sum_{k=0}^{n-1}\lambda_{0,k-1}^{-1}\frac{|Y_k|}{\lambda_{k}}<\infty \qquad \mbb{P}\text{-a.s.}        
\end{equation}

Recall from \cite[Proposition 2.5]{grama1} that under Conditions \textbf{H1} and \textbf{H2},  
$\mathbb{E}\log\lambda_0=\gamma\in(0,\infty)$. Hence, by \cite[Lemma 3.1]{wang}, if $\mathbb{E}\log^+ \frac{|Y_0|}{\lambda_0}<\infty$, then for every $1\le i \le d$,
\[
\sup_{n\ge1}\mathbb{E}_{\xi,Y}(W_n^i)
   =1+\sum_{k=0}^{\infty}
      \frac{\langle Y_k,U_{k+1,\infty}\rangle}
           {\lambda_{0,k}U_{0,\infty}(i)}
   <\infty \qquad \text{a.s.}
\]
Because $(W_n^i)_{n\ge0}$ is $L^1$-bounded under $\mathbb{P}_{\xi,Y}$, the martingale convergence theorem implies that the limit $W^i=\lim_{n\to\infty} W_n^i$ exists $\mathbb{P}_{\xi,Y}$-almost surely, taking values in $[0,\infty)$ for each $i$. As this holds for almost every realisation of $\xi$ and $Y$, we conclude that  
\[
W^i_n\to W^i\quad \mathbb{P}\text{-a.s.}
\]

Conversely, assume $\mathbb{E}\log^+ \frac{|Y_0|}{\lambda_0}=\infty$.  
Set $\phi_n:=\log^+ \frac{|Y_n|}{\lambda_n}$ and let $F(x):=\mathbb{P}(\phi_n\le x)$. For any fixed $A>0$,

\[
1+\sum_{n=1}^{\infty}\bigl[1-F(An)\bigr]
   \ge \int_{0}^{\infty}\bigl[1-F(x)\bigr]\,dx
   =\mathbb{E}\phi_1
   =\infty .
\]

By the Borel--Cantelli lemma, there is an infinite number of events $\phi_n\ge An$ with probability $1$. Hence $\lim_{n\to\infty}\phi_n/n\ge A$ almost surely. Observe that  
\[
\Bigl( \lambda_{0,n-1}^{-1}\frac{|Y_n|}{\lambda_{n}}\Bigr)^{1/n}
   =\exp\Bigl(\frac{\phi_n}{n}-\frac{1}{n}\log\lambda_{0,n-1}\Bigr).
\]
From \eqref{1.21} we have $\lim_{n\to\infty}\frac{1}{n}\log\lambda_{0,n-1}=\gamma\in(0,\infty)$ almost surely. Because $A$ can be chosen arbitrarily large,  
\[
\limsup_{n\to\infty}\Bigl( \lambda_{0,n-1}^{-1}\frac{|Y_n|}{\lambda_{n}}\Bigr)^{1/n}=\infty
\quad\text{a.s.},
\]
which implies $\sup_{n\ge1}\mathbb{E}_{\xi,Y}(W_n^i)=\infty$ almost surely.

\noindent\textbf{Proof of (c).} 
Assume first that Condition \textbf{H4} holds. By Proposition \ref{pro1}, this implies $\mathbb{P}(\tilde{W}^i > 0) > 0$ and $\mathbb{E}_{\xi}\tilde{W}^i = 1$. From the decomposition of $W^i$ in Proposition \ref{pro4}, we obtain
\[
\mathbb{P}(W^i > 0) > 0 \quad \text{and} \quad \mathbb{E}_{\xi}W^i \ge 1.
\]
Moreover, taking $\mathbb{P}_{\xi,Y}$-expectation in \eqref{2.8} yields
\begin{equation}\label{ew2}
\mathbb{E}_{\xi,Y} W^i = 1 + \sum_{j=0}^{\infty} \frac{\langle Y_j, U_{j+1,\infty} \rangle}{\lambda_{0,j} U_{0,\infty}(i)}.
\end{equation}
Since $\mathbb{E}\log^+ (|Y_0|/\lambda_0) < \infty$, it follows from \cite[Lemma 3.1]{wang} that the series in \eqref{ew2} converges almost surely, whence $\mathbb{E}_{\xi,Y} W^i < \infty$. Consequently, by \eqref{ew1},
\[
\lim_{n \to \infty} \mathbb{E}_{\xi,Y} W_n^i = \mathbb{E}_{\xi,Y} W^i.
\]
An application of the extended Scheff\'{e} theorem (see, e.g., \cite[Lemma 5.1]{wang}) now gives $\mathbb{E}_{\xi,Y} |W^i - W_n^i| \to 0$ $\mathbb{P}$-almost surely.

Conversely, assume $\mathbb{E}_{\xi,Y} |W^i - W_n^i| \to 0$ almost surely. Then, from \eqref{ew1} and \eqref{ew2},
\[
\mathbb{E}_{\xi,Y} W^i = \lim_{n \to \infty} \mathbb{E}_{\xi,Y} W_n^i \ge \lim_{n \to \infty} \mathbb{E}_{\xi} \tilde{W}_n^i = 1 \quad \mathbb{P}\text{-a.s.}
\]
This inequality, together with the decomposition \eqref{2.8}, forces $\mathbb{P}(\tilde{W}^i > 0) > 0$. By Proposition \ref{pro1}, Condition \textbf{H4} must therefore be satisfied. This completes the proof. \qed

\noindent\textbf{Proof of Corollary \ref{cor1}.}

Following an argument analogous to that for \eqref{3.11}, we obtain
\begin{equation}\label{3.15}
\mbb{E}_{\xi}(W^i_{n+1} \mid \mathcal{F}_n) = W_n^i + \frac{\langle \mbb{E}_{\xi} Y_n, U_{n+1,\infty} \rangle}{\lambda_{0,n} U_{0,\infty}(i)}, \qquad n \ge 0.
\end{equation}
Since the environment sequence \((\xi_n)\) is i.i.d., the process \((W_n^i)\) forms a submartingale whenever \(\mbb{E}_{\xi} Y_0 < \infty\) for every realization of \((\xi_n)\).

Iterating \eqref{3.15} yields
\begin{equation}\label{3.13}
\mbb{E}_{\xi} W_n^i = 1 + \sum_{k=0}^{n-1} \frac{\langle \mbb{E}_{\xi} Y_k, U_{k+1,\infty} \rangle}{\lambda_{0,k} U_{0,\infty}(i)}.
\end{equation}
Adopting the same method applied to equation \eqref{ew1}, we complete the proof of $(\mathrm{a})$. 

The $L^1$ convergence under $\mathbb{P}_{\xi}$ follows by applying the argument of Theorem \ref{th1}(c) to the submartingale $(W_n^i)$ under the measure $\mathbb{P}_{\xi}$, with $\mathbb{E}_{\xi}Y_k$ in place of $Y_k$. \qed

\noindent\textbf{Proof of Corollary \ref{cor2}.}
 Taking expectations on both sides of \eqref{3.13} gives
\begin{equation}\label{3.12}
\mbb{E} W_n^i = 1 + \sum_{k=0}^{n-1} \mbb{E}\!\left( \frac{\langle Y_k, U_{k+1,\infty} \rangle}{\lambda_{0,k} U_{0,\infty}(i)} \right).
\end{equation}
Observe that \(\langle Y_k, U_{k+1,\infty} \rangle\) and \(\lambda_k\) depend only on \((\xi_k, \xi_{k+1}, \dots)\). Hence,
\begin{align*}
\mbb{E}_{T^k\xi}\!\left( \frac{\langle Y_k, U_{k+1,\infty} \rangle}{\lambda_{0,k} U_{0,\infty}(i)} \right)
&= \frac{\langle Y_k, U_{k+1,\infty} \rangle}{\lambda_k} \,
\mbb{E}_{T^k\xi}\!\bigl( \lambda_{0,k-1} U_{0,\infty}(i) \bigr)^{-1} \\
&\le \lambda_k^{-1} \langle Y_k, U_{k+1,\infty} \rangle \, \kappa(-1)^k,
\end{align*}
where the inequality follows from \eqref{2.2}. Substituting this into \eqref{3.12} leads to
\begin{align*}
\mbb{E} W_n^i
&\le 1 + \sum_{k=0}^{n-1} \kappa(-1)^k \,
\mbb{E}\!\left( \lambda_k^{-1} \langle Y_k, U_{k+1,\infty} \rangle \right) \\
&= 1 + \sum_{k=0}^{n-1} \kappa(-1)^k \,
\mbb{E}\!\left( \lambda_0^{-1} \langle Y_0, U_{1,\infty} \rangle \right).
\end{align*}
Since \(U_{1,\infty}\) is strictly positive for every realization of \(\xi\), the finiteness of \(\mbb{E}\lambda_0^{-1}\langle Y_0, U_{1,\infty} \rangle\) is equivalent to \(\mbb{E}\lambda_0^{-1}|Y_0| < \infty\). Therefore, if \(\mbb{E}\lambda_0^{-1}|Y_0| < \infty\) and \(\kappa(-1) < 1\), we have \(\sup_{n \ge 0} \mbb{E} W_n^i < \infty\).

For the converse direction, we note that \eqref{2.2} implies
\[
\mbb{E}_{T^k\xi}\!\bigl( \lambda_{0,k-1} U_{0,\infty}(i) \bigr)^{-1}
\ge \mbb{E} \lVert M_{0,k-1} \rVert^{-1}.
\]
Inserting this estimate into \eqref{3.12} gives
\[
\mbb{E} W_n^i \ge 1 + \sum_{k=0}^{n-1}
\bigl( \mbb{E} \lVert M_{0,k-1} \rVert^{-1} \bigr) \,
\mbb{E}\!\left( \lambda_0^{-1} \langle Y_0, U_{1,\infty} \rangle \right),
\]
with the convention \(\vt M_{0,-1}\vt=1\). By the definition \(\kappa(s) = \lim_{n\to\infty} (\mbb{E} \lVert M_{0,n-1} \rVert^{s})^{1/n}\), the condition $\sup_{n \ge 0} \mbb{E} W_n^i < \infty$, together with $\mathbb{P}(|Y_0|=0)<1$, forces \(\mbb{E}\lambda_0^{-1}|Y_0| < \infty\) and \(\kappa(-1) \le 1\). 

The $L^1$ convergence under $\mbb{P}$ also follows from the same argument used for the $L^1$ convergence of $(\tilde{W}_n)$ in Theorem \ref{th1}(c). \qed

\subsection{Proof of Theorem \ref{th4}}\label{sec5.2}

We first prove the sufficiency. Assume that
\[
\kappa(-p) < 1, \qquad \max_{1 \le i, j \le d} \mathbb{E} \left( \frac{Z_1^i(j)}{M_0(i,j)} \right)^p < \infty, \qquad \text{and} \qquad \mathbb{E} \frac{|Y_0|^p}{\lambda_0^p} < \infty.
\]
Recall that $\kappa(-p) = \lim_{n \to \infty} (\mathbb{E} \| M_{0,n-1} \|^{-p})^{1/n}$. Hence, $\mathbb{E} \| M_{0,n-1} \|^{-p} < 1$ for all sufficiently large $n$. By H\"{o}lder's inequality,
\[
\bigl( \mathbb{E} \| M_{0,n-1} \|^{1-p} \bigr)^{1/(p-1)} \le \bigl( \mathbb{E} \| M_{0,n-1} \|^{-p} \bigr)^{1/p},
\]
which implies $\kappa(1-p) < 1$. Consequently, applying Proposition \ref{pro2} yields the $L^p$ convergence $\tilde{W}_n^i \to \tilde{W}^i$ for every $1 \le i \le d$. Doob's $L^p$ maximal inequality (see, e.g., \cite[pp. 252]{durrett}) then gives $\mathbb{E} (\tilde{W}^{i\ast})^p < \infty$.

From the decomposition \eqref{decompose} of $W_n$, we have for all $n \ge 0$,
\[
W_n \le \tilde{W}^{\ast}[(i,\emptyset)] \;+\; \sum_{j=1}^{n} \sum_{k=1}^{d} \sum_{l=1}^{Y_{j-1}(k)} \tilde{W}^{\ast}[(k,0_{j-1}l)] \frac{U_{j,\infty}(k)}{\lambda_{0,j-1} U_{0,\infty}(i)}.
\]
Taking the $L^p$ norm under $\mathbb{P}$ and using its subadditivity, we obtain
\begin{align*}
\| W_n \|_p \le \| \tilde{W}^{\ast}[(i,\emptyset)] \|_p 
+ \sum_{j=1}^{n} \sum_{k=1}^{d} \Biggl[ \mathbb{E} \Biggl( & \bigl( \lambda_{0,j-2} U_{0,\infty}(i) \bigr)^{-p} \\
& \times \Bigl( \lambda_{j-1}^{-1} \sum_{l=1}^{Y_{j-1}(k)} \tilde{W}^{\ast}[(k,0_{j-1}l)] \Bigr)^p \Biggr) \Biggr]^{1/p}.
\end{align*}
Observe that the term $\lambda_{j-1}^{-1} \sum_{l=1}^{Y_{j-1}(k)} \tilde{W}^{\ast}[(k,0_{j-1}l)]$ depends only on $(\xi_{j-1}, \xi_j, \dots)$. Therefore,
\begin{align*}
& \mathbb{E} \Biggl[ \bigl( \lambda_{0,j-2} U_{0,\infty}(i) \bigr)^{-p} \Bigl( \lambda_{j-1}^{-1} \sum_{l=1}^{Y_{j-1}(k)} \tilde{W}^{\ast}[(k,0_{j-1}l)] \Bigr)^p \Biggr] \\
&= \mathbb{E} \Biggl[ \mathbb{E}_{T^{j-1}\xi} \bigl( \lambda_{0,j-2} U_{0,\infty}(i) \bigr)^{-p} \; \mathbb{E}_{\xi} \Bigl( \lambda_{j-1}^{-1} \sum_{l=1}^{Y_{j-1}(k)} \tilde{W}^{\ast}[(k,0_{j-1}l)] \Bigr)^p \Biggr] \\
&\le C \, (\kappa(-p))^{\,j-1} \; \mathbb{E} \Biggl[ \mathbb{E}_{\xi} \Bigl( \lambda_{j-1}^{-1} \sum_{l=1}^{Y_{j-1}(k)} \tilde{W}^{\ast}[(k,0_{j-1}l)] \Bigr)^p \Biggr],
\end{align*}
where the last inequality follows from \eqref{2.2}. Since the variables $\tilde{W}^{\ast}[(k,0_{j-1}l)]$ ($l \ge 1$) are identically distributed under $\mathbb{P}_\xi$, we apply the elementary inequality $(\sum_{i=1}^m x_i)^p \le m^{\,p-1} \sum_{i=1}^m x_i^p$ to get
\begin{align*}
\mathbb{E}_{\xi} \Bigl( \lambda_{j-1}^{-1} \sum_{l=1}^{Y_{j-1}(k)} \tilde{W}^{\ast}[(k,0_{j-1}l)] \Bigr)^p
&\le \lambda_{j-1}^{-p} \, \mathbb{E}_{\xi} \Bigl[ (Y_{j-1}(k))^{\,p-1} \sum_{l=1}^{Y_{j-1}(k)} \bigl( \tilde{W}^{\ast}[(k,0_{j-1}l)] \bigr)^p \Bigr] \\
&\le \lambda_{j-1}^{-p} \, \mathbb{E}_{\xi} \bigl( Y_{j-1}(k)^p \bigr) \; \mathbb{E}_{\xi} (\tilde{W}^{\ast}[(k,0_{j-1}1)])^p.
\end{align*}
Combining the estimates, we arrive at
\[
\sup_n \| W_n \|_p \le \| \tilde{W}^{\ast}[(i,\emptyset)] \|_p \;+\; \sum_{j=1}^{\infty} \sum_{k=1}^{d} C \, (\kappa(-p))^{(j-1)/p} \Bigl[ \mathbb{E} \frac{Y_{j-1}(k)^p}{\lambda_{j-1}^{\,p}} \; \mathbb{E} (\tilde{W}^{\ast}[(k,0_{j-1}1)])^p \Bigr]^{1/p}.
\]
Under the assumed conditions $\kappa(-p)<1$, $\mathbb{E}|Y_0|^p/\lambda_0^p < \infty$, and $\mathbb{E}(\tilde{W}^{i\ast})^p < \infty$, the right-hand side is finite. Hence $\sup_n \| W_n \|_p < \infty$, and the $L^p$ convergence theorem (see, e.g., \cite[pp. 251]{durrett}) yields
\[
\lim_{n \to \infty} \| W_n^i - W^i \|_p = 0.
\]

\noindent\textit{Necessity.} Suppose now that $\sup_n \| W_n \|_p < \infty$. From \eqref{decompose} and the inequality $(\sum_{i=1}^m x_i)^p \ge \sum_{i=1}^m x_i^p$ for non-negative $x_i$, we have
\[
W_n^p \ge \bigl( \tilde{W}_n[(i,\emptyset)] \bigr)^p \;+\; \sum_{j=1}^{n} \sum_{k=1}^{d} \Bigl( \frac{U_{j,\infty}(k)}{\lambda_{0,j-1} U_{0,\infty}(i)} \Bigr)^p \Bigl( \sum_{l=1}^{Y_{j-1}(k)} \tilde{W}_{n-j}[(k,0_{j-1}l)] \Bigr)^p.
\]
Let $\mathbb{E}_{\xi, j-1}(\cdot) = \mathbb{E}_\xi(\cdot \mid Y_{j-1})$ denote the conditional expectation under $\mathbb{P}_\xi$ given $Y_{j-1}$. For fixed $j \ge 1$, note that $\tilde{W}_{n-j}[(k,0_{j-1}l)]$ and $Y_{j-1}(k)$ are independent under $\mathbb{P}_\xi$, and $\mathbb{E}_\xi \tilde{W}_{n-j}[(k,0_{j-1}l)] = 1$. Applying Jensen's inequality,
\begin{align}
\mathbb{E}_{\xi, j-1} \Bigl( \sum_{l=1}^{Y_{j-1}(k)} \tilde{W}_{n-j}[(k,0_{j-1}l)] \Bigr)^p
&\ge \Bigl( \mathbb{E}_{\xi, j-1} \sum_{l=1}^{Y_{j-1}(k)} \tilde{W}_{n-j}[(k,0_{j-1}l)] \Bigr)^p \nonumber\\
&= Y_{j-1}(k)^p \bigl( \mathbb{E}_\xi \tilde{W}_{n-j}[(k,0_{j-1}l)] \bigr)^p \nonumber\\
&= Y_{j-1}(k)^p.\label{4.1}
\end{align}
Consequently, $\mathbb{E}_\xi \bigl( \sum_{l=1}^{Y_{j-1}(k)} \tilde{W}_{n-j}[(k,0_{j-1}l)] \bigr)^p \ge \mathbb{E}_\xi Y_{j-1}(k)^p$. Taking expectations in the lower bound for $W_n^p$ gives
\begin{align*}
\mathbb{E} W_n^p &\ge \mathbb{E} \bigl( \tilde{W}_n[(i,\emptyset)] \bigr)^p \\
&\quad + \sum_{j=1}^{n} \sum_{k=1}^{d} \mathbb{E} \Biggl[ \mathbb{E}_{T^{j-1}\xi} \Bigl( \frac{U_{j,\infty}(k)}{\lambda_{0,j-2} U_{0,\infty}(i)} \Bigr)^p \; \mathbb{E}_{T^{j-1}\xi} \frac{Y_{j-1}(k)^p}{\lambda_{j-1}^{\,p}} \Biggr] \\
&\ge \mathbb{E} \bigl( \tilde{W}_n[(i,\emptyset)] \bigr)^p \;+\; C \sum_{j=1}^{n} \sum_{k=1}^{d} \mathbb{E} \| M_{0,j-2} \|^{-p} \; \mathbb{E} \frac{Y_0(k)^p}{\lambda_0^{\,p}},
\end{align*}
where the last line uses \eqref{2.2} (with the convention $\| M_{0,-1} \| = 1$). Because $\sup_n \mathbb{E} W_n^p < \infty$ and $\mathbb{P}(|Y_0|=0) < 1$, we must have $\mathbb{E} (\tilde{W}_n[(i,\emptyset)])^p < \infty$ and
\[
\kappa(-p) \le 1, \qquad \mathbb{E} \frac{|Y_0|^p}{\lambda_0^p} < \infty.
\]
Finally, Proposition \ref{pro2} shows that $\mathbb{E} (\tilde{W}_n[(i,\emptyset)])^p < \infty$ implies
\[
\max_{1 \le i, j \le d} \mathbb{E} \Bigl( \frac{Z_1^i(j)}{M_0(i,j)} \Bigr)^p < \infty \quad \text{and} \quad \kappa(1-p) < 1.
\]
This completes the proof of the necessity, and therefore of the theorem.

\subsection{Proof of Theorem \ref{th5}}
The proof follows an argument analogous to that of Theorem~\ref{th4} and is divided into two parts.

\noindent\textit{Sufficiency.} Assume that $\kappa(-p) < 1$ and $\mathbb{E}|Y_{0}|^p \lambda_0^{-p} < \infty$. Applying the subadditivity of the function $x \mapsto x^p$ (for $0<p<1$) to the decomposition \eqref{decompose} of $W_n^i$ yields
\begin{align}
\| W^i_n \|_p \le \| \tilde{W}_n[(i,\emptyset)] \|_p 
+ \sum_{j=1}^{n} \sum_{k=1}^{d} \Biggl[ \mathbb{E} \Biggl( & \bigl( \lambda_{0,j-2} U_{0,\infty}(i) \bigr)^{-p} \nonumber\\
& \times \Bigl( \lambda_{j-1}^{-1} \sum_{l=1}^{Y_{j-1}(k)} \tilde{W}_{n-j}[(k,0_{j-1}l)] \Bigr)^p \Biggr) \Biggr]^{1/p}.\label{4.2}
\end{align}
Following a similar estimation procedure as in \eqref{4.1}, and using the inequality $\mathbb{E} X^p \le (\mathbb{E} X)^p$ for $0<p<1$ and a nonnegative random variable $X$, we obtain
\[
\mathbb{E}_{\xi, j-1} \Bigl( \sum_{l=1}^{Y_{j-1}(k)} \tilde{W}_{n-j}[(k,0_{j-1}l)] \Bigr)^p
\le Y_{j-1}(k)^p.
\]
Consequently,
\begin{align*}
& \mathbb{E} \Biggl[ \bigl( \lambda_{0,j-2} U_{0,\infty}(i) \bigr)^{-p} \Bigl( \lambda_{j-1}^{-1} \sum_{l=1}^{Y_{j-1}(k)} \tilde{W}_{n-j}[(k,0_{j-1}l)] \Bigr)^p \Biggr] \\
&= \mathbb{E} \Biggl[ \mathbb{E}_{T^{j-1}\xi} \bigl( \lambda_{0,j-2} U_{0,\infty}(i) \bigr)^{-p} \; \mathbb{E}_{\xi} \Bigl( \lambda_{j-1}^{-1} \sum_{l=1}^{Y_{j-1}(k)} \tilde{W}_{n-j}[(k,0_{j-1}l)] \Bigr)^p \Biggr] \\
&\le C \, \left(\kappa(-p)\right)^{\,j-1} \; \mathbb{E}\left(\frac{Y_{0}(k)}{\lambda_0}\right)^p.
\end{align*}
Note that $\mathbb{E}(\tilde{W}_n[(i,\emptyset)])^p \le (\mathbb{E}\tilde{W}_n[(i,\emptyset)])^p = 1$. Substituting this estimate into \eqref{4.2} gives
\[
\| W^i_n \|_p \le 1 + \sum_{j=1}^{n} \sum_{k=1}^{d} C \, \left(\kappa(-p)\right)^{\,j-1} \; \mathbb{E}\left(\frac{Y_{0}(k)}{\lambda_0}\right)^p.
\]
Under the assumptions $\kappa(-p)<1$ and $\mathbb{E}|Y_{0}|^p\lambda_0^{-p}<\infty$, the right-hand side is uniformly bounded in $n$, establishing $\sup_n \|W_n^i\|_p < \infty$.

\noindent\textit{Necessity.} Conversely, suppose $\sup_n \| W_n \|_p < \infty$. From \eqref{decompose}, we have the lower bound
\[
W_n^i \ge \sum_{k=1}^{d} \sum_{l=1}^{Y_{n-1}(k)} \tilde{W}_{0}[(k,0_{n-1}l)] \frac{U_{n,\infty}(k)}{\lambda_{0,n-1} U_{0,\infty}(i)}
= (\lambda_{0,n-2}U_{0,\infty}(i))^{-1} \frac{\langle Y_{n-1}, U_{n,\infty}\rangle}{\lambda_{n-1}}.
\]
Taking expectations and using the independence structure yields
\[
\mathbb{E} (W_n^i)^p \ge \mathbb{E} \Bigl[ \mathbb{E}_{T^{n-1}\xi} (\lambda_{0,n-2}U_{0,\infty}(i))^{-p} \; \mathbb{E}_{\xi} \Bigl( \frac{\langle Y_{0}, U_{1,\infty}\rangle}{\lambda_{0}} \Bigr)^p \Bigr].
\]
Since $U_{1,\infty} > 0$ almost surely and $\kappa(-p) = \lim_{n \to \infty} (\mathbb{E}\|M_{0,n-1}\|^{-p})^{1/n}$, the uniform boundedness of $\mathbb{E} (W_n^i)^p$ forces
\[
\kappa(-p) \le 1 \quad \text{and} \quad \mathbb{E}|Y_{0}|^p\lambda_0^{-p} < \infty.
\]
This completes the proof of the necessity, and thus of the theorem.

\section*{Acknowledgement}
\par

\end{document}